\newcommand{\uhr}{\upharpoonright}
\DeclareMathOperator{\rng}{rng}
\newcommand{\sub}[1]{_{\textrm{\tiny{#1}}}}
\renewcommand{\leq}{\leqslant}
\renewcommand{\geq}{\geqslant}
\renewcommand{\nleq}{\nleqslant}
\newcommand{\tless}{<\sub{T}}
\newcommand{\tgreater}{>\sub{T}}
\newcommand{\tleq}{\leq \sub{T}}
\newcommand{\ntleq}{\nleq \sub{T}}
\newcommand{\mc}{\mathcal}
\newtheorem{thm}{Theorem}[section]
\newtheorem{cor}[thm]{Corollary}
\newtheorem{lem}[thm]{Lemma}
\newtheorem{prop}[thm]{Proposition}
\theoremstyle{definition}
\newtheorem{defn}[thm]{Definition}
\newtheorem{q}[thm]{Question}
\title[The Strength of Some Combinatorial Principles Related to
RT$^2_2$]{The Strength of Some Combinatorial Principles Related to
\\Ramsey's Theorem for Pairs}
\author[Hirschfeldt]{Denis R. Hirschfeldt}
  \address{Department of Mathematics\\
  The University of Chicago\\
  Chicago, IL 60637-1514, USA}
  \email{drh@math.uchicago.edu}
\author[Jockusch]{Carl G. Jockusch, Jr.}
  \address{Department of Mathematics\\
  1409 W. Green St.\\
  University of Illinois at Urbana-Cham\-paign\\
  Urbana, IL 61801-2975, USA}
  \email{jockusch@math.uiuc.edu}
\author[Kjos-Hanssen]{Bj\o rn Kjos-Hanssen}
  \address{Department of Mathematics\\
  University of Connecticut\\
  Storrs, CT 06269-3009, USA}
  \email{bjorn@math.uconn.edu}
\author[Lempp]{Steffen Lempp}
  \address{Department of Mathematics\\
  University of Wisconsin\\
  Madison, WI 53706-1388, USA}
  \email{lempp@math.wisc.edu}
\author[Slaman]{Theodore A. Slaman}
  \address{Department of Mathematics\\
  The University of California\\
  Berkeley, CA 94720-3840, USA}
  \email{slaman@math.berkeley.edu}
\thanks{All of the authors thank the Institute for Mathematical
Sciences of the National University of Singapore for its generous
support during the period July 18 -- August 15, 2005, when part of
this research was carried out. The first author was partially
supported by NSF grants DMS-0200465 and DMS-0500590. The second author
was supported by NSFC Grand International Joint Project `New
Directions in Theory and Applications of Models of Computation', No.\
60310213.  The fourth author was partially supported by NSF grant
DMS-0140120. The fifth author was partially supported by NSF grant
DMS-0501167.}
\begin{document}

\begin{abstract}
We study the reverse mathematics and computability-the\-o\-re\-tic
strength of (stable) Ramsey's Theorem for pairs and the related
principles COH and DNR. We show that SRT$^2_2$ implies DNR over
RCA$_0$ but COH does not, and answer a question of Mileti by showing
that every computable stable $2$-coloring of pairs has an incomplete
$\Delta^0_2$ infinite homogeneous set. We also give some extensions of
the latter result, and relate it to potential approaches to showing
that SRT$^2_2$ does not imply RT$^2_2$.
\end{abstract}

\maketitle

\section{Introduction}

\noindent In this paper we establish some results on the reverse
mathematics and computability-theoretic strength of combinatorial
principles related to Ramsey's Theorem for pairs. This topic has
attracted a large amount of recent research (see for instance
\cite{CJS,HS,Mileti,Milbull}), but certain basic questions still
remain open.

For a set $X$, let $[X]^2=\{Y \subset X \mid |Y|=2\}$. A
\emph{$2$-coloring} of $[\mathbb N]^2$ is a function from $[\mathbb
N]^2$ into $\{0,1\}$. A set $H \subseteq \mathbb N$ is
\emph{homogeneous} for a $2$-coloring $C$ of $[\mathbb N]^2$ if $C$ is
constant on $[H]^2$. Ramsey's Theorem for pairs (RT$^2_2$) is the
statement in the language of second-order arithmetic that every
$2$-coloring of $[\mathbb N]^2$ has an infinite homogeneous set. A
$2$-coloring $C$ of $[\mathbb N]^2$ is \emph{stable} if for each $x
\in \mathbb N$ there exists a $y \in \mathbb N$ and a $c<2$ such that
$C(\{x,z\})=c$ for all $z>y$. Stable Ramsey's Theorem for pairs
(SRT$^2_2$) is RT$^2_2$ restricted to stable colorings.

It follows from work of Jockusch \cite[Theorem 5.7]{Jockusch} that if
$n>2$ then Ramsey's Theorem for $n$-tuples is equivalent to
arithmetical comprehension (ACA$_0$), but Seetapun \cite{SS} showed
that RT$^2_2$ does not imply ACA$_0$. (All implications and
nonimplications discussed here are over the standard base theory
RCA$_0$ of reverse mathematics. For background on reverse mathematics
and discussions of many of the techniques used below, see Simpson
\cite{Si98}.)

A long-standing open question in reverse mathematics is whether
RT$^2_2$ implies Weak K\"onig's Lemma (WKL$_0$), the statement that
every computable infinite binary tree has an infinite path. (That
WKL$_0$ does not imply RT$^2_2$ follows from a result of Jockusch
\cite[Theorem 3.1]{Jockusch} discussed below.) As is well-known,
WKL$_0$ is equivalent to the statement that for each set $A$, there is
a $0,1$-valued function function $f$ that is diagonally noncomputable
relative to $A$ (where a total function $f$ is \emph{diagonally
noncomputable} if $\forall e\,(f(e) \neq \Phi_e(e))$.) A natural way
to weaken this statement is to drop the requirement that $f$ be
$0,1$-valued, and allow it to take arbitrary values in $\omega$; the
corresponding axiom system has been named DNR. In Section \ref{first}
we show that RT$^2_2$ implies DNR over RCA$_0$. In other words,
whereas we do not know whether RT$^2_2$ implies WKL$_0$, we have a
partial result toward this implication. In fact, we show that the
possibly weaker system SRT$^2_2$ already implies DNR. It is not known
whether SRT$^2_2$ is strictly weaker than RT$^2_2$; we will discuss
this question further below.

An infinite set $X$ is \emph{cohesive} for a family $R_0,R_1,\ldots$
of sets if for each $i$, one of $X \cap R_i$ or $X \cap
\overline{R}_i$ is finite. COH is the principle stating that every
family of sets has a cohesive set.  Having seen that $\mbox{RCA}_0 +
\mbox{SRT}^2_2 \vdash \mbox{DNR}$, and recalling that RT$^2_2$ is
equivalent over RCA$_0$ to SRT$^2_2$ + COH (see \cite[Lemma 7.11]{CJS}
and \cite[Corollary A.1.4]{Mileti}), we proceed to compare COH and
DNR. As noted by Cholak, Jockusch, and Slaman \cite[Lemma 9.14]{CJS},
even WKL$_0$ does not imply COH, so certainly DNR does not imply
COH. We establish that COH does not imply DNR in Section \ref{second}.
This result was independently and simultaneously obtained by
Hirschfeldt and Shore \cite[Corollary 2.21]{HS}, and as we will see,
the main ideas of the proof were already present in \cite{CJS}.

Jockusch~\cite[Theorem 3.1]{Jockusch} constructed a computable
$2$-coloring of $[\mathbb N]^2$ with no $\Delta^0_2$ infinite
homogeneous set. On the other hand, computable stable $2$-colorings
always have $\Delta^0_2$ infinite homogeneous sets. Indeed, the
problem of finding an infinite homogeneous set for a computable stable
$2$-coloring is essentially the same as the problem of finding an
infinite subset of either $A$ or $\overline{A}$ for a $\Delta^0_2$ set
$A$. More precisely, we have the following. If $A$ is $\Delta^0_2$
then there is a computable stable $2$-coloring $C$ of $[\mathbb N]^2$
such that if $H$ is homogeneous for $C$ then $H \subseteq A$ or $H
\subseteq \overline{A}$. Conversely, if $C$ is a computable stable
$2$-coloring of $[\mathbb N]^2$ then there is a $\Delta^0_2$ set $A$
such that any infinite set $B$ with $B \subseteq A$ or $B \subseteq
\overline{A}$ computes an infinite homogeneous set for $C$. (See
\cite[Proposition 2.1]{Jockusch} and \cite[Lemma 3.5]{CJS}, or
\cite[Claim 5.1.3]{Mileti}.)

Cholak, Jockusch, and Slaman~\cite[Theorem 3.1]{CJS} showed that every
computable $2$-col\-or\-ing of $[\mathbb N]^2$ has a low$_2$ infinite
homogeneous set, and suggested the possibility of separating SRT$^2_2$
and RT$^2_2$ by showing that every computable \emph{stable}
$2$-coloring of $[\mathbb N]^2$ has a \emph{low} infinite homogeneous
set. Such a result, if relativizable, would allow us to build an
$\omega$-model of SRT$^2_2$ consisting entirely of low sets, which
would therefore not be a model of RT$^2_2$. (An $\omega$-model of
second-order arithmetic is one whose first-order part is standard, and
such a model is identified with its second-order part.) However,
Downey, Hirschfeldt, Lempp, and Solomon~\cite{DHLS} constructed a
computable stable $2$-coloring of $[\mathbb N]^2$ with no low infinite
homogeneous set.

Mileti~\cite[Theorem 5.3.7]{Mileti} showed that for each $X \tless 0'$
there is a computable stable $2$-coloring of $[\mathbb N]^2$ with no
$X$-computable infinite homogeneous set. (He also showed that this is
true for any low$_2$ set $X$.)

In light of these results, Mileti~\cite[Question 5.3.8]{Mileti} asked
whether there is an infinite $\Delta^0_2$ set $A$ such that every
infinite $\Delta^0_2$ subset of $A$ or $\overline{A}$ is complete
(i.e., has degree $\bf 0'$); in other words, whether there is a
computable stable $2$-coloring of $[\mathbb N]^2$ such that any
$\Delta^0_2$ infinite homogeneous set is complete. Hirschfeldt gave a
negative answer to this question; this previously unpublished result
appears as Corollary \ref{original} below. In Theorem \ref{incthm}, we
modify the proof of this result to show that, in fact, if
$C_0,C_1,\ldots \tgreater 0$ are uniformly $\Delta^0_2$, then for
every $\Delta^0_2$ set $A$ there is a $\Delta^0_2$ subset $X$ of
either $A$ or $\overline{A}$ such that $\forall i\,(C_i \ntleq X)$. In
proving that RT$^2_2$ does not imply ACA$_0$, Seetapun \cite{SS}
showed that if $C_0,C_1,\ldots \tgreater 0$ then every $2$-coloring of
$[\mathbb N]^2$ has an infinite homogeneous set that does not compute
any of the $C_i$. Our result can be seen as a $\Delta^0_2$ analogue of
this theorem. The restriction to stable colorings is of course
necessary in this case, since as mentioned above, there are
$2$-colorings of pairs with no $\Delta^0_2$ infinite homogeneous set.

There is still a large gap between the negative answer to Mileti's
question and the result of Downey, Hirschfeldt, Lempp, and
Solomon~\cite{DHLS} mentioned above. In particular, we would like to
know the answer to the following question.

\begin{q} \label{low2q}
Let $A$ be $\Delta^0_2$. Must there be an infinite subset of either
$A$ or $\overline{A}$ that is both $\Delta^0_2$ and low$_2$?
\end{q}

A relativizable positive answer to this question would lead to a
separation between SRT$^2_2$ and RT$^2_2$, since it would allow us to
build an $\omega$-model of RCA$_0$ + SRT$^2_2$ that is not a model of
RT$^2_2$, as we now explain. We begin with the $\omega$-model
$\mathcal M_0$ consisting of the computable sets. Let $C_0$ be a
stable $2$-coloring of $[\mathbb N]^2$ in $\mathcal M_0$. Assuming a
positive answer to Question \ref{low2q}, we have an infinite
homogeneous set $H_0$ for $C_0$ that is both $\Delta^0_2$ and
low$_2$. Note that $H'_0$ is low over $0'$ and c.e.\ over $0'$.

Now let $\mathcal M_1$ be the $\omega$-model consisting of the
$H_0$-computable sets, and let $C_1$ be a stable $2$-coloring of
$[\mathbb N]^2$ in $\mathcal M_1$. Again assuming a (relativizable)
positive answer to Question \ref{low2q}, we have an infinite
homogeneous set $H_1$ for $C_1$ such that $H_0 \oplus H_1$ is both
$\Delta^0_2$ in $H_0$ and low$_2$. As before, $(H_0 \oplus H_1)'$ is
low over $0'$. It may no longer be c.e.\ over $0'$, but it is $2$-CEA
over $0'$ (that is, it is c.e.\ in and above a set that is itself
c.e.\ in and above $0'$).

Now let $\mathcal M_2$ be the $\omega$-model consisting of the $H_0
\oplus H_1$-computable sets, and continue in this way, making sure
that for every $i$ and every stable $2$-coloring $C$ of $[\mathbb
N]^2$ in $\mathcal M_i$, we have $C_j=C$ for some $j$. Let $\mathcal M
= \bigcup_i \mathcal M_i$. By construction, $\mathcal M$ is an
$\omega$-model of RCA$_0$ + SRT$^2_2$, and for every set $X$ in
$\mathcal M$, we have that $X'$ is low over $0'$ and $m$-CEA over $0'$
for some $m$. By the extension of Arslanov's Completeness Criterion
given by Jockusch, Lerman, Soare, and Solovay~\cite{JLSS}, no such $X$
can have PA degree over $0'$ (that is, $X$ cannot be the degree of a
nonstandard model of arithmetic with an extra predicate for
$0'$). However, Jockusch and Stephan \cite[Theorem 2.1]{JS} showed
that a degree contains a p-cohesive set (that is, a set that is
cohesive for the collection of primitive recursive sets) if and only
if its jump is PA over $0'$. Thus $\mathcal M$ is not a model of COH,
and hence not a model of RT$^2_2$.

Note that to achieve the separation described above, it would be
enough to show (in a relativizable way) that every $\Delta^0_2$ set
$A$ has a subset of either it or its complement that is both
$\Delta^0_2$ and low$_n$ for some $n$ (which may depend on
$A$). However, we do not even know whether every $\Delta^0_2$ set has
a subset of either it or its complement that is both $\Delta^0_2$ and
nonhigh.

The ultimate refutation of this approach to separating SRT$^2_2$ and
RT$^2_2$ would be to build a computable \emph{stable} $2$-coloring of
$[\mathbb N]^2$ for which the jump of every infinite homogeneous set
has PA degree over $0'$. (Without the condition of stability, such a
coloring was built by Cholak, Jockusch, and Slaman~\cite[Theorem
12.5]{CJS}.) Indeed, such a construction (if relativizable) would show
that every $\omega$-model of RCA$_0$ + SRT$^2_2$ is a model of
RT$^2_2$, as we now explain. Suppose that such stable colorings exist,
and let $\mathcal M$ be an $\omega$-model of RCA$_0$ +
SRT$^2_2$. Relativizing the result of Jockusch and Stephan
\cite[Theorem 2.1]{JS} on p-cohesive sets mentioned above, we can show
that $\mathcal M$ is a model of COH. But as mentioned above, SRT$^2_2$
+ COH is equivalent to RT$^2_2$ over RCA$_0$, so $\mathcal M$ is a
model of RT$^2_2$.

\section{SRT$^2_2$ implies DNR} \label{first}

\noindent The proof that SRT$^2_2$ implies DNR over RCA$_0$ is
naturally given in two parts: first we show that each $\omega$-model
of SRT$^2_2$ is a model of DNR, and then that we can in fact carry out
the proof of this implication in RCA$_0$, that is, using only
$\Sigma^0_1$-induction.

\subsection{The argument for $\omega$-models}

A set $A$ is \emph{effectively bi-immune} if there is a computable
function $f$ such that for each $e$, if $W_e \subseteq A$ or $W_e
\subseteq \overline{A}$, then $|W_e|<f(e)$.

\begin{lem}\label{carlaugust2}
There is an effectively bi-immune set $A \tleq 0'$. In fact, we
can choose the function $f$ witnessing the bi-immunity of $A$ to be
defined by $f(e) = 3e+2$.
\end{lem}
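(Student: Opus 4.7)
The plan is to construct $A$ in $\omega$ stages using $0'$ as an oracle, meeting for each $e$ a single requirement $R_e$: if $W_e \subseteq A$ or $W_e \subseteq \overline{A}$, then $|W_e| < 3e+2$. At each stage $e$ I maintain two finite disjoint sets of commitments, $P_e$ (destined for $A$) and $N_e$ (destined for $\overline{A}$), starting with $P_0 = N_0 = \emptyset$ and preserving the invariant $|P_e \cup N_e| \leq 2e$. Once an element has been placed in either set it stays there, so the construction is injury-free.

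At stage $e$, I first use $0'$ to ask whether $|W_e| \geq 3e+2$; if not, $R_e$ is vacuously satisfied and I set $P_{e+1} = P_e$, $N_{e+1} = N_e$. Otherwise, $0'$ also computes the set $S_e$ consisting of the first $3e+2$ elements enumerated into $W_e$. If $S_e \cap P_e = \emptyset$, I pick some $x \in S_e \setminus (P_e \cup N_e)$ and place it into $P_{e+1}$; symmetrically, if $S_e \cap N_e = \emptyset$, I place a different $y \in S_e \setminus (P_e \cup N_e)$ into $N_{e+1}$. After this action, both $W_e \cap P_{e+1}$ and $W_e \cap N_{e+1}$ are nonempty, so $R_e$ is met. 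Finally, set $A = \bigcup_e P_e$. Since every step of the construction can be carried out by a $0'$-oracle, $A \tleq 0'$.

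The only counting step is verifying that the witnesses $x$ and $y$ above exist. By the invariant,
\[ |S_e \setminus (P_e \cup N_e)| \geq (3e+2) - 2e = e+2 \geq 2, \]
so there are always at least two uncommitted elements of $S_e$ available. Each stage adds at most two elements to the pool of commitments, so $|P_{e+1} \cup N_{e+1}| \leq 2(e+1)$, maintaining the invariant.

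The construction is entirely routine and the only real point to check is the counting inequality above; this is what forces (or at least comfortably accommodates) the bound $f(e)=3e+2$. In particular, no priority argument is needed, because commitments are permanent and at most two new commitments are made per stage.
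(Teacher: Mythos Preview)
Your approach is essentially the same as the paper's, but there is a genuine gap in the claim that $A \tleq 0'$. You argue that ``every step of the construction can be carried out by a $0'$-oracle,'' but this only shows that the sequence $(P_e,N_e)_e$ is uniformly $0'$-computable; it does not show that $A = \bigcup_e P_e$ is $0'$-computable. As written, $A$ is only $0'$-c.e.: to decide whether $n \in A$ you must determine whether $n$ is ever placed into some $P_{e+1}$, and since the element chosen at stage $e$ comes from $S_e \subseteq W_e$ with no relation between its size and $e$, there is no bound on the stages you must inspect.

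The paper closes exactly this gap by adding one more action at stage $e$: if $A(e)$ is still undecided, set $A(e)=0$. In your language, this means committing $e$ to $N_{e+1}$ whenever $e \notin P_{e+1} \cup N_e$. With this extra step, membership of $n$ in $A$ is decided by the end of stage $n$, so $A \tleq 0'$. The cost is a third commitment per stage, giving the invariant $|P_e \cup N_e| \leq 3e$ rather than $2e$; your counting inequality then becomes $|S_e \setminus (P_e \cup N_e)| \geq (3e+2) - 3e = 2$, which is still enough. This is precisely why the bound in the statement is $3e+2$ rather than the $2e+2$ your two-commitments-per-stage construction would otherwise allow.
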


\begin{proof}
We build $A$ in stages, via a $0'$-computable construction. At each
stage we decide the value of $A(n)$ for at most three $n$'s. At stage
$e$, we check whether $W_e$ has at least $3e+2$ many elements. If
so, then there are at least two elements $n_0,n_1 \in W_e$ at which
we have not yet decided the value of $A$. Let $A(n_0)=0$ and
$A(n_1)=1$. In any case, if $A(e)$ is still undefined then let
$A(e)=0$.
\end{proof}

We also need the following lemma, which follows immediately from the
equivalence mentioned above between finding homogeneous sets for
computable stable colorings and finding subsets of $\Delta^0_2$ sets
or their complements. A \emph{Turing ideal} is a subset of $2^\omega$
closed under Turing reduction and join. A subset of $2^\omega$ is a
Turing ideal if and only if it is an $\omega$-model of RCA$_0$.

\begin{lem} \label{ja}
A Turing ideal $\mc I$ is an $\omega$-model of SRT$^2_2$ if and only
if for each set $A$, if $A \tleq C'$ for some $C \in \mc I$, then
there is an infinite $B \in \mc I$ such that either $B \subseteq A$ or
$B \subseteq \overline A$.
\end{lem}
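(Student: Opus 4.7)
My plan is to relativize the standard correspondence (stated in the paragraph preceding the lemma) between computable stable $2$-colorings of $[\mathbb{N}]^2$ and $\Delta^0_2$ sets, then pass this correspondence through the definition of a Turing ideal. The two ingredients are (a) from a $\Delta^0_2(C)$ set $A$ one can build a $C$-computable stable coloring $D$ every infinite homogeneous set of which lies in $A$ or in $\overline{A}$, and (b) from a $C$-computable stable coloring $D$ one can define a set $A \tleq C'$ such that any infinite subset of $A$ or $\overline{A}$ computes, together with $C$, an infinite homogeneous set for $D$. Both are the obvious relativizations of \cite[Proposition 2.1]{Jockusch} and \cite[Lemma 3.5]{CJS}, as already noted in the excerpt.

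For the ``if'' direction, I would assume the $\Delta^0_2$-subset property and take an arbitrary stable $2$-coloring $D \in \mc I$. Applying (b) with $C=D$ produces a set $A \tleq D'$ whose infinite subsets in $A$ or $\overline A$ yield, together with $D$, infinite homogeneous sets for $D$. The hypothesis supplies an infinite $B \in \mc I$ with $B \subseteq A$ or $B \subseteq \overline A$, and since $\mc I$ is closed under join and Turing reduction, the resulting infinite homogeneous set for $D$ lies in $\mc I$. Thus $\mc I \models \mbox{SRT}^2_2$.

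For the ``only if'' direction, suppose $\mc I \models \mbox{SRT}^2_2$ and fix $C \in \mc I$ and $A \tleq C'$. Applying (a) relative to $C$ produces a stable $2$-coloring $D \tleq C$ with the property that every infinite homogeneous set for $D$ is contained in $A$ or in $\overline A$. Since $C \in \mc I$ and $\mc I$ is closed under Turing reduction, $D \in \mc I$. Because $\mc I$ is a model of SRT$^2_2$, there is an infinite $B \in \mc I$ homogeneous for $D$, and this $B$ is the required subset of $A$ or $\overline A$.

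There is really no obstacle here beyond checking that the two directions of the stable-coloring/$\Delta^0_2$ correspondence relativize uniformly, which is routine: the constructions cited in \cite{Jockusch,CJS,Mileti} use $C$ only as an oracle in an otherwise uniform way, so relativizing them to an arbitrary $C$ is immediate, and the computations that recover the homogeneous set from a subset of $A$ or $\overline{A}$ (and vice versa) remain effective in the appropriate oracles, so closure of $\mc I$ under $\oplus$ and $\tleq$ keeps all relevant sets inside $\mc I$.
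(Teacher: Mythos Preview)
Your proposal is correct and is exactly the approach the paper indicates: the paper does not give a detailed proof but simply states that the lemma ``follows immediately from the equivalence mentioned above between finding homogeneous sets for computable stable colorings and finding subsets of $\Delta^0_2$ sets or their complements,'' and your write-up is precisely a careful unpacking of that equivalence (relativized to an arbitrary $C \in \mc I$) together with the closure of $\mc I$ under $\tleq$ and $\oplus$.
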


We can now prove the implication between SRT$^2_2$ and DNR for
$\omega$-models.

\begin{thm} \label{SRTDNR}
Each $\omega$-model of SRT$^2_2$ is a model of DNR.
\end{thm}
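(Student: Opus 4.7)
The plan is to combine Lemmas \ref{carlaugust2} and \ref{ja}. Let $\mc{I}$ be an $\omega$-model of SRT$^2_2$ and fix an arbitrary $C \in \mc{I}$. The goal is to exhibit a function $g \in \mc{I}$ that is diagonally noncomputable relative to $C$, that is, $g(e) \neq \Phi_e^C(e)$ for every $e$.

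First I would relativize Lemma \ref{carlaugust2} to $C$: the same stagewise $C'$-computable construction, with $W_e$ replaced by $W_e^C$ throughout, yields a set $A \tleq C'$ that is effectively bi-immune relative to $C$, with the explicit witness $f(e) = 3e+2$. That is, whenever $W_e^C \subseteq A$ or $W_e^C \subseteq \overline{A}$, we have $|W_e^C| < 3e+2$. Next, since $A \tleq C'$ and $C \in \mc{I}$, Lemma \ref{ja} supplies an infinite set $B \in \mc{I}$ with $B \subseteq A$ or $B \subseteq \overline{A}$; by symmetry we may assume $B \subseteq A$.

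It remains to produce a DNR function relative to $C$ from $B \oplus C$, which will automatically lie in $\mc{I}$ by closure under Turing reduction and join. The natural candidate is $g(e) = b_{3e+1}$, where $b_0 < b_1 < \cdots$ is the increasing enumeration of $B$; this function is $B$-computable. To verify that $g$ is DNR relative to $C$, one argues by contradiction: assuming $g(e) = \Phi_e^C(e)$ for some $e$, the $s$-$m$-$n$ theorem (together with the recursion theorem) is used to manufacture a $C$-c.e.\ index $j$ whose associated set $W_j^C$ contains at least $3j+2$ elements all drawn from $B \subseteq A$, directly contradicting the effective bi-immunity bound $f(j) = 3j+2$.

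The main obstacle is exactly this last verification, where the specific linear bound $3e+2$ coming from Lemma \ref{carlaugust2} must be matched against the overhead of an $s$-$m$-$n$-based construction that amplifies a single failure of diagonal noncomputability into a large $C$-c.e.\ subset of $A$. The constant $3$ in $f(e) = 3e+2$ is precisely what affords enough room for the padding needed in this argument. Once the verification is complete, the theorem follows, and the passage to a proof inside RCA$_0$ (using only $\Sigma^0_1$-induction) will be handled separately in the subsequent subsection.
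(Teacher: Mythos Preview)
Your overall plan---relativize Lemma~\ref{carlaugust2}, apply Lemma~\ref{ja}, and extract a DNR function from the resulting infinite $B\subseteq A$ (or $B\subseteq\overline A$)---is exactly right, and matches the paper. The gap is in the last step: your candidate $g(e)=b_{3e+1}$ is \emph{not} in general DNR relative to $C$, and the sketched contradiction does not go through. From a single failure $\Phi_e^C(e)=b_{3e+1}$ you learn one element of $B$, but you have no $C$-computable access to the other $b_i$'s, so there is no way to produce a $C$-c.e.\ set $W_j^C\subseteq B$ with $|W_j^C|\geq 3j+2$. Neither $s$-$m$-$n$ nor the recursion theorem helps here: any $C$-c.e.\ set you build can only use values produced by $C$-computations, and a single coincidence $\Phi_e^C(e)=b_{3e+1}$ does not manufacture the remaining $3e+1$ elements of $B$ that you would need. (Your remark that the constant $3$ is ``precisely what affords enough room for the padding'' is a warning sign: in the correct argument no particular constant is needed---any computable bi-immunity bound works.)

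The paper's fix is to take $g(e)$ to be a \emph{c.e.\ index} for the finite set $\{b_0,\ldots,b_{3e+1}\}$, rather than the number $b_{3e+1}$ itself. Then $W_{g(e)}\subseteq B$ has exactly $3e+2$ elements, so by effective bi-immunity $W_{g(e)}\neq W_e$ for every $e$; i.e.\ $g$ is fixed-point-free. One then composes with a computable $f$ satisfying $W_{f(e)}=W_{\Phi_e(e)}$ when $\Phi_e(e)\!\downarrow$ (and $W_{f(e)}=\emptyset$ otherwise): the function $h=g\circ f$ is DNR, since $h(e)=\Phi_e(e)$ would force $W_{g(f(e))}=W_{f(e)}$. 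This $h$ is $B$-computable and hence lies in $\mc I$. Relativizing to an arbitrary $C\in\mc I$ is routine.
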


\begin{proof}
Let $\mc I$ be a Turing ideal that is an $\omega$-model of
SRT$^2_2$. We show that $\mc I$ contains a diagonally noncomputable
function. The proof clearly relativizes to get a function that is
diagonally noncomputable relative to $X$ for any $X \in \mc I$.

Let $A$ be as in Lemma \ref{carlaugust2}. By Lemma \ref{ja}, there is
an infinite $B \in \mc I$ such that $B$ is a subset of $A$ or
$\overline A$. By the choice of $A$, for all $e$, if $W_e \subseteq B$
then $|W_e|<3e+2$.

Let $g$ be such that $W_{g(e)}$ is the set consisting of the first
$3e+2$ many elements of $B$ (in the usual ordering of $\omega$). For
any $e$, if $W_e=W_{g(e)}$ then $W_e \subseteq B$, and so $|W_e| <
3e+2$. But $|W_{g(e)}| = 3e+2$, so this is a contradiction. Thus
$\forall e\,(W_e \neq W_{g(e)})$.

Now let $f$ be a computable function such that
$W_{f(e)}=W_{\Phi_e(e)}$ if $\Phi_e(e)\!\downarrow$, and
$W_{f(e)}=\emptyset$ otherwise. Then $h = g \circ f$ is diagonally
noncomputable, since it is total and for each $e$, if
$\Phi_e(e)\!\downarrow$ then $W_{h(e)} \neq W_{f(e)} =
W_{\Phi_e(e)}$. But $h$ is also computable in $B$, and hence belongs
to $\mc I$.
\end{proof}

\subsection{The proof-theoretic argument}

We now simply need to analyze the above proof to ensure that
$\Sigma^0_1$-induction suffices to carry it out. The formal analog of
Lemma \ref{ja} is the statement that SRT$^2_2$ is equivalent to the
following principle, called D$^2_2$: For every $0,1$-valued function
$d(x,s)$, if $\lim_s d(x,s)$ exists for all $x$, then there is an
infinite set $B$ and a $j<2$ such that $\lim_s d(x,s) = j$ for all $x
\in B$. The equivalence of SRT$^2_2$ and D$^2_2$ over RCA$_0$ is
claimed in \cite[Lemma 7.10]{CJS}. However, the argument indicated
there for the $\mbox{D}^2_2 \rightarrow \mbox{SRT}^2_2$ direction
appears to require $\Pi^0_1$-bounding, which is not provable in
RCA$_0$. It is unknown whether $\mbox{D}^2_2 \rightarrow
\mbox{SRT}^2_2$ is provable in RCA$_0$.  Fortunately, we need only the
other direction, since we are starting with the assumption that
SRT$^2_2$ holds. This direction is proved as in \cite[Lemma
7.10]{CJS}, and we reproduce the proof here for the reader's
convenience. Work in $\mbox{RCA}_0 + \mbox{SRT}^2_2$. Let a function
$d(x,s)$ be given that satisfies the hypothesis of D$^2_2$. Give the
pair $\{x,s\}$ with $x < s$ the color $d(x,s)$.  The infinite
homogeneous set produced by SRT$^2_2$ for this stable coloring
satisfies the conclusion of D$^2_2$.

\begin{thm}
$\mbox{RCA}_0 \vdash \mbox{SRT}^2_2 \rightarrow \mbox{DNR}$.
\end{thm}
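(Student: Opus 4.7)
The plan is to formalize the argument of Theorem \ref{SRTDNR} inside $\mbox{RCA}_0 + \mbox{SRT}^2_2$, replacing the Turing-ideal characterization of Lemma \ref{ja} by the implication $\mbox{SRT}^2_2 \to \mbox{D}^2_2$ sketched above, and replacing the $0'$-computable construction of Lemma \ref{carlaugust2} by a formally verifiable $\Delta^0_2$ approximation.

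The first step is to define, by primitive recursion, a total computable $\{0,1\}$-valued function $d(x,s)$ that simulates the stages of Lemma \ref{carlaugust2} using the approximations $W_{e,s}$. At stage $s$, for each $e \leq s$ that has not already been processed at an earlier stage, if $|W_{e,s}| \geq 3e + 2$ then commit the two least elements of $W_{e,s}$ that have not yet been committed to the values $0$ and $1$ respectively; then, if $s$ itself is still not committed, commit it to $0$. Set $d(x,s)$ to be the committed value of $x$ at stage $s$, or $0$ if no commitment for $x$ has yet been made. A $\Sigma^0_1$-induction on $s$ shows that at most $3s$ commitments have been made by the end of stage $s$, so whenever $|W_{e,s}| \geq 3e + 2$ there are always two uncommitted elements of $W_{e,s}$ available for the action. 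Since commitments are permanent and every $x$ is committed by the end of stage $x$, we have $\lim_s d(x,s) = d(x,x)$ for every $x$, so the hypothesis of $\mbox{D}^2_2$ holds for $d$.

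Writing $A^{(j)} = \{x : \lim_s d(x,s) = j\}$, the construction ensures that if $|W_e| \geq 3e+2$ then the action for $e$ places one element of $W_e$ into $A^{(0)}$ and another into $A^{(1)}$, so $W_e$ is contained in neither. Applying $\mbox{D}^2_2$ to $d$ then yields an infinite set $B$ and some $j<2$ with $B \subseteq A^{(j)}$, whence $W_e \subseteq B$ forces $|W_e| < 3e + 2$. The final paragraph of the proof of Theorem \ref{SRTDNR} now carries over without change: bounded search in $B$ defines a computable $g$ such that $W_{g(e)}$ enumerates the first $3e + 2$ elements of $B$, so $W_e = W_{g(e)}$ is impossible; composing with the $f$ satisfying $W_{f(e)} = W_{\Phi_e(e)}$ when $\Phi_e(e)\!\downarrow$ and $W_{f(e)} = \emptyset$ otherwise, we obtain $h = g \circ f$, a $B$-computable diagonally noncomputable function, witnessing DNR.

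The main potential obstacle is purely formal: verifying within $\Sigma^0_1$-induction both the counting bound on the number of commitments and the existence of $\lim_s d(x,s)$. Both come down to primitive recursive bookkeeping on the decidable ``committed'' predicate, so $\mbox{RCA}_0$ suffices.
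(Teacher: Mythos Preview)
Your overall strategy matches the paper's: derive $\mbox{D}^2_2$ from $\mbox{SRT}^2_2$, build a computable approximation $d(x,s)$ to an effectively bi-immune set, and then reuse the last paragraph of Theorem~\ref{SRTDNR}. The gap is in your replacement for Lemma~\ref{carlaugust2}. Your counting argument bounds the total number of commitments by the end of stage $s$ by roughly $3s$, and then concludes that when $|W_{e,s}| \geq 3e+2$ there are two uncommitted elements of $W_{e,s}$. This does not follow: $e$ may be processed at a stage $s$ far larger than $e$, so a bound of order $3s$ says nothing about a set of size $3e+2$. Concretely, take $W_0 = \{100,101\}$ with both elements enumerated only at stage $200$; by stage $101$ your default rule has already permanently committed both $100$ and $101$ to $0$, so when $e=0$ is finally processed there are no uncommitted elements of $W_{0,200}$ left, and the bi-immunity requirement for $W_0$ is never met. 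Dropping the default commitments does not help either, since a lower-priority $e'>e$ processed at an earlier stage can just as well consume the elements of $W_e$.

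The paper's construction avoids this by replacing permanent commitments with a finite-injury priority argument: each $R_e$ holds markers $n^0_e, n^1_e \in W_{e,s}$, and when $R_e$ acts it \emph{undefines} all markers of weaker requirements $R_{e'}$ with $e'>e$. Thus at the moment $R_e$ chooses its markers, only the at most $2e$ markers of stronger requirements are blocked, and $|W_{e,s}|\geq 2e+2$ suffices. Convergence of $d(n,t)$ now needs a separate argument (control of a given $n$ can only pass to stronger requirements, so the number of changes is bounded), which the paper carries out via $\Pi^0_1$-induction in $\mbox{RCA}_0$.
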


\begin{proof}
Given the existence of a set $A$ as in Lemma \ref{carlaugust2} (or
more precisely, of a function $d(x,s)$ such that $A(x) = \lim_s
d(x,s)$), the definition of the diagonally noncomputable function $h$
given in the proof of Theorem \ref{SRTDNR} can clearly be carried out
using D$^2_2$ and $\Sigma^0_1$-induction.

So the only part of the proof of Theorem \ref{SRTDNR} we need to
consider more carefully is the construction of $A$ and the
satisfaction of all bi-immunity requirements. More precisely, fix a
model $\mathcal M$ of RCA$_0$ + SRT$^2_2$. Within that model, we have
an enumeration of the $\mathcal M$-c.e.\ sets $W_0,W_1,\ldots$ (where
the indices range over all elements of the first-order part of
$\mathcal M$). We need to show the existence of a function $d(x,s)$ in
$\mathcal M$ such that $\lim_s d(x,s)$ exists for all $x$, and for
every $W_e$, if there is a $j<2$ such that $\forall x \in W_e\,
(\lim_s d(x,s) = j)$, then $|W_e| < 3e+2$. (We will actually be able
to use $2e+2$ instead of $3e+2$.)

We can build $d$ in much the same way as we built $A$, but we need to
be more careful because we no longer have access to an oracle for
$0'$. So we need a computable construction to replace the
$0'$-computable construction in the proof of Lemma \ref{ja}. Let
$R_e$ be the $e$th bi-immunity requirement.

In this construction, $R_e$ may control up to two numbers $n^0_e$ and
$n^1_e$ at any point in the construction. At stage $t=\langle e,s
\rangle$, if $|W_{e,s}| \geq 2e+2$, then for each $i<2$ such that
$n^i_e$ is undefined, define $n^i_e$ to be different from each
$n^j_{e'}$ for $e' \leq e$, and undefine all $n^j_{e'}$ for $e' >
e$. In any case, for each $n$, if $n=n^j_k$ for some $j$ and $k$, then
let $d(n,t)=j$, and otherwise let $d(n,t)=0$.

It is now easy to check (in RCA$_0$) that $\lim_t d(n,t)$ exists for
all $n$, since for each $n$, either $n$ is never controlled by a
requirement, in which case $d(n,t)=0$ for all $t$, or there is a stage
$t$ at which $n$ is controlled by $R_e$ for some $e$. In the latter
case, since control of a number can only pass to stronger
requirements, there are at most $e$ many $u \geq t$ such that
$d(n,u+1) \neq d(n,u)$.

The last thing we need to check is that each $R_e$ is satisfied. It
follows by induction that for each $e$, there are at most $2e$ many
numbers that are ever controlled by any $R_{e'}$ with $e'<e$, and thus
there is a stage $v_e$ by which all such numbers have been controlled
by such requirements. (This is an instance of $\Pi^0_1$-induction,
which holds in RCA$_0$ (see Simpson \cite[Lemma 3.10]{Si98}), using a
formula saying that for all finite sequences of size $2e+1$ of
distinct elements and for all $t$, it is not the case that each
element of the sequence has been controlled by some $R_{e'}$ with
$e'<e$ by stage $t$.) So if $|W_e| \geq 2e+2$, then picking a stage
$t=\langle e,s \rangle \geq v_e$ such that $|W_{e,s}| \geq 2e+2$, the
$n^i_e$ must be defined at stage $t$, and will never be undefined at a
later stage, so $\lim_u d(n^i_e,u) = i$. Thus $R_e$ is satisfied.
\end{proof}

\section{COH does not imply DNR} \label{second}

\noindent In this section we show that COH does not imply DNR over
RCA$_0$. We first recall a connection between diagonally
noncomputable functions and special $\Pi^0_1$ classes.

\begin{defn}
For $n \geq 1$ and $A \in 2^\omega$, a $\Pi^0_n$ subclass of
$2^\omega$ is \emph{$A$-special} if it has no $A$-computable
members. A class is \emph{special} if it is $\emptyset$-special.
\end{defn}

\begin{thm}[Jockusch and Soare {\cite[Corollary 1.3]{Pacific}}]
\label{2becomes1}
If $A$ computes an element of a special $\Pi^0_2$ class, then $A$
computes an element of a special $\Pi^0_1$ class.
\end{thm}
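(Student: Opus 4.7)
The plan is to manufacture, from the $\Pi^0_2$ description of $P$, an auxiliary $\Pi^0_1$ class $Q\subseteq 2^\omega$ which is special whenever $P$ is and which contains an $A$-computable element whenever $A$ computes some $X\in P$. The guiding idea is that membership in a $\Pi^0_2$ class becomes a $\Pi^0_1$ condition once one attaches to each candidate $X$ its Skolem modulus.

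First, write $P=\bigcap_n U_n$ where $U_n=[W_n]$ for a uniformly c.e.\ sequence of sets of strings $W_n\subseteq 2^{<\omega}$; by pruning we may assume $U_n\supseteq U_{n+1}$. For each $X\in P$ the modulus
\[
g_X(n) \;=\; \mu s\,[\,\exists \tau\preceq X,\ |\tau|\le s,\ \tau\in W_{n,s}\,]
\]
is total (because $X\in\bigcap_n [W_n]$) and is computable from $X$; hence it is computable from any $A\ge_T X$. Thus $A$ uniformly computes the pair $(X,g_X)$, and this pair satisfies the $\Pi^0_1$ condition (in $2^\omega\times\omega^\omega$) that for every $n$ some initial segment of $X$ of length at most $g_X(n)$ has been enumerated into $W_{n,g_X(n)}$.

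The central move is to transfer this $\Pi^0_1$ condition from $2^\omega\times\omega^\omega$ to $2^\omega$ alone. I would use a block encoding: code $(Y,g)$ with $g$ strictly increasing as $Z\in 2^\omega$ whose $n$th block writes $g(n)-g(n-1)$ in unary (followed by a marker bit) and then the fresh bits $Y(g(n-1)),\dots,Y(g(n)-1)$. Let $T\subseteq 2^{<\omega}$ be the set of strings consistent with being an initial segment of such a code in which every completed block verifies $Y\!\upharpoonright\!g(n)\in W_{n,g(n)}$; this is a computable tree, and I set $Q=[T]$. The code of $(X,g_X)$ lies in $Q$ and is $A$-computable, giving step (i). For step (ii), any computable $Z\in Q$ whose blocks continue forever decodes to a computable $Y\in\bigcap_n U_n=P$, contradicting specialness of $P$.

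The main obstacle is exactly the loophole in step (ii): a computable infinite path through $T$ could correspond to a ``truncated'' code whose block structure stops after finitely many blocks and whose tail is padded with zeros, since ``the marker bit appears infinitely often'' is a $\Pi^0_2$, not $\Pi^0_1$, condition. The remedy is to design the encoding so that every infinite path through the computable tree is forced to be a genuine complete encoding, for instance by building the marker bits into positions dictated by a shape that leaves no room for indefinite postponement, and then verifying that with this choice the set of codes of pairs $(X,g_X)$ with $X\in P$ is already closed in $2^\omega$. Once that combinatorial bookkeeping is in place, $Q=[T]$ is a special $\Pi^0_1$ class with an $A$-computable member, as required.
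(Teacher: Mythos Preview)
The paper does not supply a proof of this theorem; it is quoted directly from Jockusch--Soare and used as a black box for the corollary that follows. So there is no proof in the paper to compare against.

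Your Skolemization step is correct and standard: attaching the modulus $g_X$ to $X$ turns the $\Pi^0_2$ membership condition into a $\Pi^0_1$ condition on pairs, producing a special $\Pi^0_1$ class in $2^\omega\times\omega^\omega$ (equivalently, in $\omega^\omega$) with an $A$-computable element. The real content of the theorem lies in the descent from Baire space to Cantor space, and you have correctly located the obstacle there.

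However, your proposed remedy cannot work as described. You ask for an encoding under which the set of genuine codes of pairs $(Y,g)$ is itself closed in $2^\omega$, so that $[T]$ contains no truncated paths. But the set of pairs you must encode is not compact: the moduli $g$ admit no uniform computable bound. A continuous injection of a non-compact space into $2^\omega$ never has closed image, so whatever block scheme you adopt, either you impose a computable bound on block lengths (and then $g_X$ may outgrow it, so the code of $(X,g_X)$ is excluded from $[T]$), or you do not (and then the eventually-constant tails survive as computable paths in $[T]$, destroying specialness). Your phrase ``a shape that leaves no room for indefinite postponement'' amounts to the first horn of this dilemma. This is a topological obstruction, not a bookkeeping issue, and ``once that combinatorial bookkeeping is in place'' understates what remains.

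The passage from special $\Pi^0_1$ classes in $\omega^\omega$ to special $\Pi^0_1$ classes in $2^\omega$ genuinely requires a further idea. In Jockusch--Soare it is tied to their characterization of the degrees of members of special $\Pi^0_1$ classes via fixed-point-free (equivalently, diagonally noncomputable) functions, rather than to a direct coding. Your outline reaches the right intermediate object but stops just short of that step.
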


\begin{cor}\label{ok}
Any diagonally noncomputable function computes an element of a special
$\Pi^0_1$ class.
\end{cor}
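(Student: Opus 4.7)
My plan is to reduce to Theorem \ref{2becomes1}: it suffices to show that every DNC function computes an element of some special $\Pi^0_2$ subclass of $2^\omega$. I will build one such class by coding total functions $\omega\to\omega$ as elements of $2^\omega$ and then imposing the DNC condition on the coded function.

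Given a total $f\colon\omega\to\omega$, I code it as $G_f\in 2^\omega$, where $G_f(\langle e,n\rangle)=1$ iff $f(e)=n$, so that $G_f\tequiv f$ uniformly. Let $P$ consist of those $g\in 2^\omega$ such that (i) for each $e$, exactly one $n$ satisfies $g(\langle e,n\rangle)=1$, and (ii) whenever $\Phi_e(e)\!\downarrow=n$, we have $g(\langle e,n\rangle)=0$. The existence part of (i) is $\Pi^0_2$, its uniqueness part is $\Pi^0_1$, and (ii) is $\Pi^0_1$, so $P$ is $\Pi^0_2$. The class $P$ is special: a computable $g\in P$ would yield a computable total function $e\mapsto n_e$, where $n_e$ is the unique $n$ with $g(\langle e,n\rangle)=1$, and by (ii) this function would be DNC, contradicting the definition of DNC. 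On the other hand, if $f$ is DNC then $G_f\in P$ and $G_f\tleq f$, so $f$ computes a member of this special $\Pi^0_2$ class.

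With this in hand, Theorem \ref{2becomes1} applied to $f$ produces an $f$-computable element of a special $\Pi^0_1$ class, completing the proof. I do not anticipate any real obstacle: the arithmetical complexity bounds on (i) and (ii) are routine from inspection, and once the coding is set up, the conclusion follows by direct invocation of Theorem \ref{2becomes1}.
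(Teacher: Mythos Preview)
Your proof is correct and essentially identical to the paper's: both encode DNC functions as graphs in $2^\omega$, observe that the resulting class of codes is a special $\Pi^0_2$ class, and then invoke Theorem \ref{2becomes1}. The only cosmetic difference is that the paper folds the existence clause and the DNC clause together into a single $\Pi^0_2$ conjunct, whereas you separate them into a $\Pi^0_2$ existence condition and a $\Pi^0_1$ diagonalization condition.
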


\begin{proof}
Consider the special $\Pi^0_2$ class
\begin{multline*}
\{A \mid \forall x,t\,\exists y\,\exists s>t\, [\langle x,y\rangle\in
A \, \wedge\, \neg(\Phi_{x,s}(x)\!\downarrow=y)]\, \wedge \\
\forall x,a,b\, [(\langle x,a\rangle \in A\, \wedge\, \langle
x,b\rangle \in A) \rightarrow a=b]\}.
\end{multline*}
It is easy to check that any diagonally noncomputable function
computes an element of this class. The corollary now follows from
Theorem \ref{2becomes1}.
\end{proof}

We now consider the relationship between cohesiveness and special
$\Pi^0_1$ classes.

\begin{lem}[Cholak, Jockusch and Slaman {\cite[Lemma 9.16]{CJS}}]
\label{mathias}
Let $A \in 2^\omega$, let $P$ be an $A$-special $\Pi^0_1$ class, and
let $R_0,R_1,\ldots \tleq A$. Then there is an $\vec R$-cohesive set
$G$ that does not compute any element of $P$.
\end{lem}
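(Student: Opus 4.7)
The plan is to construct $G$ by Mathias forcing with $A$-computable reservoirs. A condition is a pair $(D, X)$ where $D$ is a finite set, $X \tleq A$ is infinite, and $\max D < \min X$; a stronger condition $(D', X')$ adds finitely many elements of $X$ to $D$ and shrinks $X$, keeping $X'$ infinite and $A$-computable. We build a descending sequence of conditions $(D_s, X_s)$ and set $G = \bigcup_s D_s$. We interleave two families of requirements: cohesiveness for each $R_i$, and for each $e$ the diagonalization requirement $N_e : \Phi_e^G$ is not an element of $P$.

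Cohesiveness is routine: given $(D, X)$, one of $X \cap R_i$ and $X \cap \overline{R}_i$ is infinite, and since $R_i, X \tleq A$ that half is again $A$-computable, so we use it as the new reservoir. For $N_e$, fix a computable tree $T \subseteq 2^{<\omega}$ with $P = [T]$ and case-split. In the \emph{escape case} there is a finite $F \subseteq X$ above $D$ and some $k$ with $\Phi_e^{D \cup F} \uhr k$ convergent and not in $T$; then extending to $(D \cup F, X')$, where $X' \subseteq X$ avoids the use of that computation, forces $\Phi_e^G \uhr k = \Phi_e^{D \cup F} \uhr k \notin T$, so $\Phi_e^G \notin [T] = P$. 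In the complementary \emph{captive case}, for every finite $F \subseteq X$ above $D$, every converging initial segment of $\Phi_e^{D \cup F}$ lies in $T$.

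The heart of the argument, and the main obstacle, is the captive case. Here the plan is to refine $X$ to an infinite $A$-computable $X' \subseteq X$ satisfying one of the following. Either (i) there is some $n$ with $\Phi_e^{D \cup F}(n)\uparrow$ for every finite $F \subseteq X'$ above $D$, in which case $(D, X')$ forces $\Phi_e^G(n) \uparrow$ and hence $\Phi_e^G \notin 2^\omega \supseteq P$; or (ii) there is a function $Y$ such that, for every $n$, $\Phi_e^{D \cup F}(n) \downarrow = Y(n)$ for every sufficiently large $F \subseteq X'$ above $D$, which makes $Y$ $A$-computable by a uniform search using $X' \tleq A$. In subcase (ii) the captive hypothesis forces each $Y \uhr (n+1)$ into $T$, so $Y \in [T] = P$ is $A$-computable, contradicting the $A$-specialness of $P$. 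Producing the required refinement $X' \tleq A$ is the delicate point: it requires a Ramsey-style thinning on $X$, choosing $X'$ so that the values $\Phi_e^{D \cup F}(n)$ either stabilize as $F$ grows within $X'$ or fail to converge altogether at some fixed $n$. This is the cone-avoidance pigeonhole familiar from the CJS framework, and is where I expect the bulk of the technical work to lie.
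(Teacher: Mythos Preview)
Your overall framework---Mathias forcing with $A$-computable reservoirs, interleaving cohesiveness steps with requirements $N_e$, and splitting $N_e$ into an escape case and a captive case---matches the approach the paper cites from \cite{CJS}. The escape case and the cohesiveness steps are fine.

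The gap is in the captive case. Your dichotomy (i)/(ii) asks for an $A$-computable thinning $X'\subseteq X$ on which, for every $n$, the values $\Phi_e^{D\cup F}(n)$ \emph{stabilize} over sufficiently large finite $F\subseteq X'$. This stabilization is not in general obtainable with an $A$-computable reservoir: even for a single $n$, the value $\Phi_e^{D\cup F}(n)$ can flip depending on the parity of $|F|$, say, and no $A$-computable thinning cures that; trying to handle all $n$ simultaneously only compounds the problem. The ``Ramsey-style thinning'' you invoke is the device used in cone avoidance for a \emph{single} noncomputable $C$ (look for a split, and if there is none the value is $A$-computably determined), but it does not transfer to avoiding an entire $\Pi^0_1$ class in the way you describe.

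The correct replacement for (ii) is simpler and does not require thinning $X$ at all. If neither the escape case nor any partiality-forcing extension is available---that is, for every finite $D'\supseteq D$ with $D'\setminus D\subseteq X$ and every $n$ there is a finite $F\subseteq X$ above $\max D'$ with $\Phi_e^{D'\cup F}(n)\!\downarrow$---then greedily build $D=D_0\subseteq D_1\subseteq\cdots$ by choosing at step $n$ some $F\subseteq X$ above $\max D_n$ with $\Phi_e^{D_n\cup F}\uhr(n+1)\!\downarrow$ and setting $D_{n+1}=D_n\cup F$. The search is $A$-effective since $X\tleq A$, so $Z=\bigcup_n D_n$ is $A$-computable; $\Phi_e^{Z}$ is total; and by the captive hypothesis every initial segment of $\Phi_e^{Z}$ lies in $T$, so $\Phi_e^{Z}\in P$. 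This contradicts $A$-specialness. Thus in the captive case one can always either pass to an extension forcing $\Phi_e^G$ partial, or derive a contradiction outright---no stabilization is needed.
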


This lemma is proved using Mathias forcing with $A$-computable
conditions. We will use two results about Mathias forcing, but since
we will not work with this notion directly, we refer to \cite[Section
9]{CJS}, \cite[Section 6]{BKLS}, and \cite[Section 2]{HS} for the
relevant definitions. Analyzing the proof of Lemma \ref{mathias}, we
immediately obtain the following result.

\begin{cor}[to the proof of Lemma \ref{mathias}] \label{222}
There is an $m\in\omega$ such that if $G$ is $m$-$A$-generic for
Mathias forcing with $A$-computable conditions, then $G$ is cohesive
with respect to any collection of sets $\vec{R} \tleq A$.
\end{cor}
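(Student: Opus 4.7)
The plan is to revisit the proof of Lemma \ref{mathias} and isolate its cohesiveness portion, which is achieved by meeting a uniformly-defined family of dense sets whose arithmetic complexity does not depend on the particular $\vec R$. Recall that a Mathias condition with $A$-computable reservoirs is a pair $(D,E)$ with $D$ finite and $E$ an infinite $A$-computable set, ordered by $(D',E') \leq (D,E)$ iff $D \subseteq D' \subseteq D \cup E$ and $E' \subseteq E$; the generic set $G$ is the union of the finite parts along the generic filter.

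For each set $R \tleq A$, consider the collection of conditions
\[
\mathcal{D}_R = \{(D,E) : E \subseteq R\} \cup \{(D,E) : E \subseteq \overline{R}\}.
\]
This set is dense: given any $(D,E)$, one of $E \cap R$ or $E \cap \overline{R}$ is infinite and still $A$-computable, so $(D,E)$ has an extension in $\mathcal{D}_R$. If the generic filter meets $\mathcal{D}_R$ at some stage via a condition $(D_n,E_n)$, then every element added to $G$ after that stage lies in $E_n \subseteq R$ or in $E_n \subseteq \overline{R}$, and consequently one of $G \cap R$, $G \cap \overline{R}$ is finite; that is, $G$ is cohesive with $R$. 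The crucial point is that, uniformly in an $A$-index for $R$ and in an $A$-index for the reservoir of a given condition, membership in $\mathcal{D}_R$ is a $\Pi^0_1(A)$ statement, with complexity independent of $R$. Consequently there is a fixed $m$ --- determined only by how ``$m$-$A$-generic for Mathias forcing with $A$-computable conditions'' is calibrated in the references --- such that any $m$-$A$-generic filter meets $\mathcal{D}_{R_i}$ for every $i$, for every $\vec R \tleq A$ simultaneously. This is the required $m$.

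The only obstacle here is presentational rather than mathematical: one must match the complexity of the family $\{\mathcal{D}_R : R \tleq A\}$ with the precise definition of $m$-$A$-genericity for Mathias forcing adopted in \cite[Section 9]{CJS}, \cite[Section 6]{BKLS}, and \cite[Section 2]{HS}, so that a single numerical value of $m$ can be read off. No new combinatorial or recursion-theoretic difficulty arises, since the delicate portion of the proof of Lemma \ref{mathias} --- forcing $G$ not to compute any member of the $A$-special $\Pi^0_1$ class $P$ --- is not invoked in this corollary.
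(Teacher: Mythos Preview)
Your proposal is correct and follows exactly the approach the paper intends: it isolates the cohesiveness portion of the Mathias-forcing argument from \cite[Lemma 9.16]{CJS}, observes that the relevant dense sets $\mathcal{D}_R$ have uniformly bounded arithmetic complexity over $A$ (indeed $\Pi^0_1(A)$, since the negation $E\not\subseteq R \wedge E\not\subseteq\overline{R}$ is $\Sigma^0_1(A)$), and hence a fixed level $m$ of genericity suffices for all $\vec R\tleq A$ at once. The paper gives no explicit proof beyond the remark ``analyzing the proof of Lemma~\ref{mathias}, we immediately obtain the following result,'' and your write-up is precisely that analysis; the only small addition worth making is to note that the same low-complexity dense sets also guarantee $G$ is infinite, as required by the definition of cohesive.
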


It is clear that Lemma \ref{mathias} generalizes to deal with all
$\Pi^0_1$ classes at once; this is proved directly in \cite[Lemma
6.3]{BKLS}.

\begin{lem}[Binns, Kjos-Hanssen, Lerman, and Solomon {\cite[Lemma
6.3]{BKLS}}] \label{111}
Let $P$ be a $\Pi^0_1$ class and let $A$ be a set. Let $G$ be
$3$-$A$-generic for Mathias forcing with $A$-computable conditions. If
$P$ is $A$-special, then $P$ is $(G \oplus A)$-special.
\end{lem}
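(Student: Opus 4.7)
The plan is a standard density argument. Suppose for contradiction that $\Phi^{G \oplus A}_e \in P$ for some $e$, and let $T$ be a computable tree with $P = [T]$. Let $D_e$ denote the collection of Mathias conditions $(F', I')$ with $I' \tleq A$ that force $\Phi^{G \oplus A}_e \notin [T]$, either by forcing $\Phi^{G \oplus A}_e(k)\!\uparrow$ for some $k$ or by forcing $\Phi^{G \oplus A}_e \uhr n \notin T$ for some $n$. A routine count shows that $D_e$ is definable within $\Sigma^0_3(A)$, so by $3$-$A$-genericity the filter meets $D_e$, contradicting the assumption that $\Phi^{G \oplus A}_e \in P$. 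It therefore suffices to prove that $D_e$ is dense.

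To show density, fix a condition $(F, I)$ with $I \tleq A$ and split into two cases. \emph{Case 1}: some extension $(F', I') \leq (F, I)$ and some $k$ witness that no finite $\rho$ with $F' \subseteq \rho$ and $\rho \setminus F' \subseteq I'$ has $\Phi^{\rho \oplus A}_e(k)\!\downarrow$; then $(F', I')$ forces $\Phi^{G \oplus A}_e(k)\!\uparrow$ and so lies in $D_e$. \emph{Case 2}: otherwise, using $A$ as oracle, iteratively build a chain $F = F_0 \subseteq F_1 \subseteq \cdots$, where $F_{k+1}$ is chosen to be the first finite extension of $F_k$ with $F_{k+1} \setminus F_k \subseteq I \cap (U_k, \infty)$ and $\Phi^{F_{k+1} \oplus A}_e(k)\!\downarrow$; here $U_k$ denotes the maximum of the uses of the first $k$ computations. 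Such an $F_{k+1}$ exists precisely because Case 1 fails, and $f(k) := \Phi^{F_{k+1} \oplus A}_e(k)$ defines an $A$-computable total function. If $f \in [T]$, then $f$ is an $A$-computable member of $P$, contradicting $A$-specialness; otherwise $f \uhr n \notin T$ for some $n$, and the condition $(F_n, I \cap (U_n, \infty))$ extends $(F, I)$ and forces $\Phi^{G \oplus A}_e \uhr n = f \uhr n \notin T$, placing it in $D_e$.

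The main obstacle is the use-tracking that makes the final forcing claim in Case 2 go through: the chain of conditions must genuinely force the initial segment $f \uhr n$ of $\Phi^{G \oplus A}_e$. This is handled by restricting each increment $F_{k+1} \setminus F_k$ to lie above $U_k$, the maximum use of previously sealed computations, so that elements added at later stages lie strictly above every prior use. With this bookkeeping, any $G$ satisfying the final condition agrees with $F_n$ below $U_n$, so the queries made in each computation $\Phi^{G \oplus A}_e(j)$ for $j < n$ produce the same answers as in $\Phi^{F_{j+1} \oplus A}_e(j)$; hence $\Phi^{G \oplus A}_e(j) = f(j)$, as required. This is the standard Mathias sealing technique, and once it is in place the density argument proceeds as described.
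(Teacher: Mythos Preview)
The paper does not supply its own proof of this lemma; it is simply quoted from \cite[Lemma 6.3]{BKLS}, with the remark that the relevant definitions and arguments for Mathias forcing are to be found in \cite[Section 9]{CJS}, \cite[Section 6]{BKLS}, and \cite[Section 2]{HS}. So there is nothing in the present paper to compare your argument against.

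That said, your argument is essentially the standard one and is correct. The two-case split (either some extension forces divergence, or one can $A$-computably thread convergent computations through the reservoir to produce an $A$-computable $f$ which must leave $T$) is exactly how the density is established in the cited sources, and your use-tracking via the sets $I\cap(U_k,\infty)$ is the right bookkeeping to make the sealed condition $(F_n, I\cap(U_n,\infty))$ genuinely force $\Phi_e^{G\oplus A}\uhr n = f\uhr n$. One small point worth tightening: your ``routine count'' that $D_e$ is $\Sigma^0_3(A)$ is loose. With the natural reading of $D_e$ (either $\exists k\,\forall\rho\,[\Phi_e^{\rho\oplus A}(k)\!\uparrow]$, or $\exists n\,[\Phi_e^{F'\oplus A}\uhr n\!\downarrow\notin T$ with use $<\min I']$), the set is already $\Sigma^0_2(A)$ once one is handed a valid condition; the extra quantifier headroom in ``$3$-$A$-generic'' is there because, in the formulations of \cite{BKLS} and \cite{CJS}, conditions are coded by indices and verifying that an index names an infinite $A$-computable reservoir is itself $\Pi^0_2(A)$. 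It would be cleaner to point this out explicitly rather than leave the complexity bound as an unargued assertion.
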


We are now ready to establish the result in the section heading.

\begin{thm}
There is an $\omega$-model of $\mbox{RCA}_0 + \mbox{COH}$ that is not
a model of DNR.
\end{thm}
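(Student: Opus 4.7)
The plan is to construct the desired $\omega$-model as a nested union of Turing ideals generated by a single set, using Mathias forcing to add cohesive sets while preserving the specialness of a fixed $\Pi^0_1$ class. Fix once and for all a special $\Pi^0_1$ class $P$. By Corollary \ref{ok}, if no set in the final model computes an element of $P$, then the model contains no diagonally noncomputable function, so in particular it fails to satisfy DNR (already for the instance $A=\emptyset$).

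I would build a chain $\emptyset = A_0 \tleq A_1 \tleq A_2 \tleq \cdots$, with $\mc M_n = \{X : X \tleq A_n\}$, maintaining the invariant that $P$ is $A_n$-special. A standard dovetailing enumerates, at each stage $n$, a sequence of sets $\vec R \tleq A_n$ to be handled, and arranges that every sequence that ever appears in any $\mc M_n$ is handled at some later stage. Given $\vec R$, I invoke Mathias forcing with $A_n$-computable conditions and pick $G$ that is $\max(m,3)$-$A_n$-generic, where $m$ is the constant from Corollary \ref{222}. Corollary \ref{222} then yields that $G$ is cohesive for $\vec R$, while Lemma \ref{111} yields that $P$ remains $(G\oplus A_n)$-special. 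Setting $A_{n+1} = A_n \oplus G$ preserves the invariant.

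Let $\mc M = \bigcup_n \mc M_n$. As the nested union of Turing ideals, $\mc M$ is a Turing ideal, hence an $\omega$-model of RCA$_0$. Every countable family of sets coded in $\mc M$ already appears in some $\mc M_n$, and by the bookkeeping a cohesive set for it is added to $\mc M$ at some later stage, so $\mc M \models \mbox{COH}$. On the other hand, since $P$ is $A_n$-special for all $n$, no member of $\mc M$ computes any element of $P$; by Corollary \ref{ok} no member of $\mc M$ is diagonally noncomputable, and hence $\mc M \not\models \mbox{DNR}$.

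The crux is the simultaneous fulfillment of the two demands on $G$: cohesiveness for $\vec R$ and preservation of the specialness of $P$ over $G\oplus A_n$. Lemmas \ref{mathias} and \ref{111} are tailored to this combination, and the observation that a single level of Mathias genericity witnesses both conclusions is what makes the iteration go through. Everything else — the existence of sufficiently generic filters for the countable forcing, the ideal structure of $\mc M$, and the bookkeeping that exhausts all sequences appearing in $\mc M$ — is routine.
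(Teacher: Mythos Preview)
Your construction is essentially the paper's, but there is a gap in the verification that $\mc M$ fails DNR. Corollary \ref{ok} asserts only that a diagonally noncomputable function computes an element of \emph{some} special $\Pi^0_1$ class; it does not say that this class is your fixed $P$. So showing that your single $P$ remains $A_n$-special for all $n$ does not, by itself, rule out a DNR function in $\mc M$: such a function might compute a member of a different special $\Pi^0_1$ class while computing no member of $P$.

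The fix is immediate and is exactly what the paper does. Lemma \ref{111} is stated for an arbitrary $\Pi^0_1$ class, so a single $\max(m,3)$-$A_n$-generic $G_n$ preserves the $A_n$-specialness of \emph{every} special $\Pi^0_1$ class simultaneously. By induction, every special $\Pi^0_1$ class is $A_n$-special for all $n$; then, given any $B \in \mc M$ computing a DNR function, Corollary \ref{ok} produces some special $\Pi^0_1$ class $Q$ with a $B$-computable member, contradicting the $A_n$-specialness of $Q$ for $n$ with $B \tleq A_n$. As a side remark, your bookkeeping over sequences $\vec R$ is unnecessary for the same reason: by Corollary \ref{222}, each $G_n$ is already cohesive for \emph{every} $\vec R \tleq A_n$, so any sequence coded in $\mc M$ lies below some $A_n$ and has $G_n \in \mc M$ as a cohesive set without any scheduling.
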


\begin{proof}
Let $m \geq 3$ be as in Corollary \ref{222}. Let $A_0=\emptyset$, and
inductively let $A_{n+1}$ be $A_n \oplus G_n$, where $G_n$ is
$m$-$A_n$-generic for Mathias forcing with $A_n$-computable
conditions. Let $\mathcal I$ be the Turing ideal generated by
$\{A_n \mid n\in\omega\}$.

Let $\mathcal M$ be the $\omega$-model determined by $\mathcal I$. If
$\vec{R} \in \mathcal I$ is a collection of sets then $\vec{R} \tleq
A_n$ for some $n$. By Corollary \ref{222}, $G_n$ is
$\vec{R}$-cohesive.  Since $G_n \in \mathcal I$, it follows that
$\mathcal M$ is a model of COH.

On the other hand, if $B$ computes a diagonally noncomputable
function, then by Corollary \ref{ok}, there is a special $\Pi^0_1$
class $P$ such that $B$ computes an element of $P$. In other words,
$P$ is not $B$-special. However, if $B \in \mathcal I$ then $B \tleq
A_n$ for some $n$. By Lemma \ref{111} and induction, $P$ is
$A_n$-special, and hence $P$ is $B$-special. So if $B$ computes a
diagonally noncomputable function, then $B \notin \mathcal I$. Thus
$\mathcal M$ is not a model of DNR.
\end{proof}

So DNR separates SRT$^2_2$ from COH. That is, SRT$^2_2$ implies DNR,
whereas COH does not.

\section{Degrees of homogeneous sets for stable colorings} \label{last}

\noindent In this section we give our negative answer to Mileti's
question mentioned in the introduction. We will need two auxiliary
results. One is an extension of the low basis theorem noted by Linda
Lawton (unpublished).

\begin{thm}[Lawton] \label{lbt}
Let $T$ be an infinite, computable, computably bounded tree, and let
$C_0,C_1,\ldots \tgreater 0$ be uniformly $\Delta^0_2$. Then $T$ has
an infinite low path $P$ such that $\forall i\,(C_i \ntleq P)$, and an
index of such a $P$ can be $0'$-computed from an index of $T$.
\end{thm}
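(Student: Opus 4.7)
The plan is to combine the low basis theorem with a cone-avoidance mechanism, via a $0'$-computable construction. We build a descending sequence of nonempty $\Pi^0_1$ subclasses $[T] = P_0 \supseteq P_1 \supseteq \cdots$ with indices uniformly $0'$-computable from an index of $T$, and extract $P$ as the leftmost element of $\bigcap_n P_n$ (nonempty by compactness of the computably bounded $[T]$). We interleave two families of requirements: lowness requirements $L_e$, which decide whether $\Phi_e^P(e)\!\downarrow$, and avoidance requirements $R_{e,i}$, which ensure $\Phi_e^P \neq C_i$. The $L_e$ are handled by the standard low-basis step: using $0'$, test whether $P_n \cap \{X : \Phi_e^X(e)\!\uparrow\}$ is nonempty (a $\Pi^0_1$ question), restrict if so, leave $P_n$ unchanged otherwise. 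This yields $P'\tleq 0'$ in the usual way.

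For $R_{e,i}$, we dovetail a $0'$-search for either of two witnesses. The first is an extendable string $\sigma$ in the defining tree of $P_n$ together with an $x$, for which $\Phi_e^\sigma(x)\!\downarrow$ and $\Phi_e^\sigma(x) \neq C_i(x)$; upon finding such, we set $P_{n+1} = P_n \cap [\sigma]$, which is $\Pi^0_1$ and forces $\Phi_e^X(x) \neq C_i(x)$ for every $X \in P_{n+1}$. The second is an $x$ for which $P_n \cap \{X : \Phi_e^X(x)\!\uparrow\}$ is nonempty; upon finding such, we restrict to that subclass, making $\Phi_e^P$ non-total and hence distinct from the total function $C_i$. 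Because $C_i$ is uniformly $\Delta^0_2$ (so $C_i(x)$ is $0'$-computable) and extendability in a $0'$-indexed $\Pi^0_1$ class is $\Pi^0_1$-in-$0'$, each of these searches is semi-decidable by $0'$.

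The crucial verification is that one of the witnesses always exists. If neither does, then every $X \in P_n$ satisfies $\Phi_e^X = C_i$. Using compactness of $P_n$ inherited from $T$'s computable boundedness, for each $x$ there is a uniform stage $s_x$ with $\Phi_{e,s_x}^X(x)\!\downarrow$ for all $X \in P_n$. Under the failure of the first witness, all extendable strings of length $s_x$ in the defining tree of $P_n$ must agree on the value $C_i(x)$. A careful analysis of the construction is meant to show that the computable structure of $T$, together with the failure of both witnesses, yields a computable procedure for $C_i$, contradicting $C_i \tgreater 0$.

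The main obstacle lies in executing this last step. Non-extendable strings at level $s_x$ in $T$ could in principle converge via $\Phi_e$ to values discrepant from $C_i(x)$, obstructing a direct computable recovery of $C_i$. The proof must argue, by tracking the history of restrictions, that any discrepant convergence on a string that was extendable when it mattered would already have surfaced as a witness of the first kind at some earlier moment of the dovetailed search, so no such discrepancy can persist into the stage when $R_{e,i}$ is handled. Making this bookkeeping precise, while keeping the whole construction $0'$-computable so that the resulting $P$ is low with $C_i \ntleq P$ for every $i$, is the principal technical challenge.
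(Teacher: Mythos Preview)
Your overall architecture---forcing with $\Pi^0_1$ classes, interleaving standard low-basis steps with cone-avoidance steps, all run computably in $0'$---is exactly the paper's approach. The gap is in the cone-avoidance verification, where you correctly sense a difficulty but then propose the wrong repair.

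The ``history of restrictions'' idea cannot work. At the moment $R_{e,i}$ is handled there is only the current class $P_n=[V]$; a string that is dead in $V$ was never extendable in $V$, so it could never have appeared as a witness of your first kind ``at some earlier moment of the dovetailed search.'' There is no bookkeeping to do.

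The actual fix, which is the paper's key lemma, avoids extendability altogether. For each $n$ set
\[
U_n=\{\sigma\in V:\Phi_e^{\sigma}(n)\!\uparrow\ \text{or}\ \Phi_e^{\sigma}(n)\!\downarrow\neq C_i(n)\}.
\]
Each $U_n$ is a computable subtree of $V$, an index for it is computable from $C_i(n)$ and hence from $0'$, and $0'$ decides whether $U_n$ is infinite. If some $U_n$ is infinite, take $P_{n+1}=[U_n]$; then every $X\in P_{n+1}$ has $\Phi_e^{X}(n)\!\uparrow$ or $\Phi_e^{X}(n)\neq C_i(n)$, so $\Phi_e^{X}\neq C_i$. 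If every $U_n$ is finite, then for each $n$ there is a level of $V$ above which \emph{every} string in $V$---extendable or not---has $\Phi_e^{\sigma}(n)\!\downarrow=C_i(n)$. One can then compute $C_i(n)$ by searching $V$ for a level at which all strings converge and agree: such a level exists, and agreement on the wrong value is impossible since by use-monotonicity it would propagate upward and make $U_n$ infinite. This contradicts $C_i\tgreater 0$. In your two-witness language, the failure of both witnesses is exactly the statement that every $U_n$ is finite (an infinite $U_n$ has a path, and a suitable initial segment of that path is extendable and yields one of your two witnesses), so your dichotomy is sound; the contradiction simply has to be extracted from the whole computable tree $V$, not from its extendable part.
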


\noindent This theorem is proved by forcing with $\Pi^0_1$ classes,
and lowness is achieved just as in the usual proof of the low basis
theorem. Steps are interspersed to guarantee cone avoidance, which is
possible by the following lemma.

\begin{lem}
Let $C$ be a noncomputable set and let $Q$ be a nonempty computably
bounded $\Pi^0_1$ class. Let $\Phi$ be a Turing reduction.  Then $Q$
has a nonempty $\Pi^0_1$ subclass $R$ such that $\Phi^f \neq C$ for
all $f \in R$. Furthermore, there is a fixed procedure that computes
an index of $R$ from indices of $Q$ and $\Phi$ and an oracle for $C
\oplus 0'$.
\end{lem}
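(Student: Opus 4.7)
The plan is to locate a single argument $x$ for which the $\Pi^0_1$ subclass
$$R := \{f \in Q : \neg(\Phi^f(x)\!\downarrow = C(x))\}$$
is nonempty, and to take this as the required $R$. Any $f \in R$ satisfies $\Phi^f \neq C$, since either $\Phi^f(x)$ diverges or $\Phi^f(x) \neq C(x)$. For each $x$ and $v$, the set $U_{x,v} := \{f \in Q : \Phi^f(x)\!\downarrow = v\}$ is a $\Sigma^0_1$ open subset of $Q$, so its complement $R_{x,v} := Q \setminus U_{x,v}$ is a $\Pi^0_1$ subclass of $Q$ whose index is uniformly computable from $x$, $v$, and indices of $Q$ and $\Phi$; I will take $R := R_{x,C(x)}$ for a suitable $x$.

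First I would show that such an $x$ must exist. Let $T$ be a computably bounded computable tree with $[T] = Q$. By compactness, $R_{x,v} = \emptyset$ iff some level $n$ of $T$ satisfies $\Phi^\sigma(x)\!\downarrow = v$ for every $\sigma \in T$ of length $n$, a $\Sigma^0_1$ condition in $(x,v)$ that is decidable by $0'$. Since $Q$ is nonempty, at most one $v$ can make $R_{x,v}$ empty for any given $x$; call such a $v$ (when it exists) the value \emph{forced} by $Q$ at $x$. If $R_{x,C(x)}$ were empty for every $x$, then $Q$ would force the value $C(x)$ at every $x$, and the function $x \mapsto C(x)$ would be computable (for each $x$, search for $n$ making all the values $\Phi^\sigma(x)$ for $\sigma \in T$ of length $n$ converge and agree, and output that common value). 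This contradicts the assumption that $C$ is noncomputable.

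With this in hand, I would use $C \oplus 0'$ to search through $x = 0, 1, 2, \ldots$ in turn: with $0'$, decide whether $Q$ forces any value at $x$ and, if so, compute that value $v$; then with $C$, compare $v$ with $C(x)$. Halt and output the first $x$ for which either $Q$ forces nothing at $x$ (in which case $R_{x,C(x)}$ is nonempty, because $C(x)$ in particular is not forced) or $Q$ forces a value $v \neq C(x)$ (in which case $R_{x,C(x)} = Q$). Termination is guaranteed by the previous paragraph, and the index of $R := R_{x,C(x)}$ is then a computable function of $x$, $C(x)$, and the given indices of $Q$ and $\Phi$. The only subtle point is that the condition ``$Q$ forces no value at $x$'' is $\Pi^0_1(0')$ rather than $\Sigma^0_1$, but this causes no problem because we present each $x$ to $0'$ as a decider and know in advance that the overall search halts.
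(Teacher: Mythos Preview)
Your proof is correct and follows essentially the same approach as the paper: for each $n$, form the $\Pi^0_1$ subclass $R_n=\{f\in Q:\neg(\Phi^f(n)\!\downarrow=C(n))\}$, observe that if every $R_n$ were empty then $C$ would be computable, and use $C\oplus 0'$ to locate the least $n$ with $R_n\neq\emptyset$. One small correction: since $T$ is computably bounded, ``$Q$ forces some value at $x$'' is already $\Sigma^0_1$ (search for a level of $T$ on which all $\Phi^\sigma(x)$ converge to a common value), so its negation is $\Pi^0_1$ rather than $\Pi^0_1(0')$; this only strengthens your conclusion, as $0'$ decides it outright.
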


\begin{proof}
Let $U$ be a computable tree with $Q = [U]$. For each $n$, let $U_n$
be the set of strings $\sigma$ in $U$ such that $\Phi^\sigma(n)$ is
either undefined or has a value other than $C(n)$.  (Here we use the
convention that computations with string oracles $\sigma$ run for at
most $|\sigma|$ steps.) Then $U_n$ is a computable tree, and an index
of it can be computed from a $C$-oracle. Note that $U_n$ is infinite
for some $n$, since otherwise $C$ is computable.  Furthermore, $\{n
\mid U_n \mbox{ is infinite }\} \tleq C \oplus 0'$, since $C$ can
compute an index of $U_n$ as a computable tree, and then $0'$ can
determine whether $U_n$ is infinite by asking whether it contains a
string of every length. Let $R = [U_n]$ for the least $n$ with $U_n$
infinite.
\end{proof}

Below, we will use the following relativized form of Theorem
\ref{lbt}, which can be proved in the same way: Let $L$ be a low
set. Let $T$ be an infinite, $L$-computable, $L$-computably bounded
tree, and let $C_0,C_1,\ldots \ntleq L$ be uniformly
$\Delta^0_2$. Then $T$ has an infinite low path $P$ such that $\forall
i\,(C_i \ntleq P)$, and an index of such a $P$ can be $0'$-computed
from indices of $L$ and $T$.

The other result we will use below is that if $C_0,C_1,\ldots
\tgreater 0$ are uniformly $\Delta^0_2$ and the complement
$\overline{A}$ of the $\Delta^0_2$ set $A$ has no infinite
$\Delta^0_2$ subset $Y$ such that $\forall i\,(C_i \ntleq Y)$, then
$A$ cannot be too sparse.

\begin{defn}
An infinite set $Z$ is \emph{hyperimmune} if for every computable
increasing function $f$, there is an $n$ such that the interval
$[f(n),f(n+1))$ contains no element of $Z$.

If $Z$ is not hyperimmune, then a computable $f$ such that
$[f(n),f(n+1)) \cap Z \neq \emptyset$ is said to \emph{witness the
non-hyperimmunity of $Z$}.
\end{defn}

\begin{prop} \label{auxprop}
Let $A$ be $\Delta^0_2$. Let $C_0,C_1,\ldots$ be uniformly
$\Delta^0_2$ and let $L$ be an infinite $\Delta^0_2$ set such that
$C_i \ntleq L$ for all $i$. If $A \cap L$ is $L$-hyperimmune, then
there is an infinite $\Delta^0_2$ set $Y \subseteq \overline{A}$ such
that $\forall i\,(C_i \ntleq Y)$.
\end{prop}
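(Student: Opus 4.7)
The plan is to build $Y$ by a Mathias-style forcing construction carried out $0'$-computably, so that $Y \tleq 0'$ and hence is $\Delta^0_2$. Conditions will be pairs $(\sigma, F)$ in which $\sigma$ is a finite subset of $\overline{A} \cap L$ and $F$ is an infinite $L$-computable subset of $L$ with $\min F > \max \sigma$; an extension adds new elements drawn from $F \cap \overline{A}$. The first use of the hyperimmunity hypothesis is to show that $F \cap \overline{A}$ is always infinite, so that such extensions are available: otherwise $F \cap A$ would be cofinite in $F$ and hence $L$-computable, making the principal function of $A \cap L$ dominated by the $L$-computable enumeration of $F$, contradicting $L$-hyperimmunity. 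Since $A$ and $L$ are $\Delta^0_2$, the set $F \cap \overline{A}$ is $0'$-computable, so the construction can locate witnesses using $0'$, and the resulting $Y = \bigcup_n \sigma_n$ will be an infinite $\Delta^0_2$ subset of $\overline{A}$.

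To satisfy the cone-avoidance requirement $R_{i,e}$ that $\Phi_e^Y \neq C_i$, I would use a Seetapun-style dichotomy. Given a condition $(\sigma, F)$, first ask (using $0'$) whether there exist a finite $\tau \supseteq \sigma$ with $\tau \setminus \sigma \subseteq F \cap \overline{A}$ and an $x$ with $\Phi_e^\tau(x)\!\downarrow\,\neq C_i(x)$; if so, extend to $(\tau, F \cap (\max \tau, \infty))$ to diagonalize. Otherwise, look for an infinite $L$-computable $F' \subseteq F$ and a witness $x_0$ such that $\Phi_e^\tau(x_0)\!\uparrow$ for every $\tau \supseteq \sigma$ with $\tau \setminus \sigma \subseteq F'$; if such $F'$ and $x_0$ exist, replace $F$ by $F'$, forcing $\Phi_e^Y(x_0)\!\uparrow$ and hence $\Phi_e^Y \neq C_i$. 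If neither option succeeds, I would argue that $C_i \tleq L$, contradicting the hypothesis.

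The main obstacle is this last step. The naive Seetapun extraction of $C_i$ would search for $\tau \subseteq F \cap \overline{A}$ with $\Phi_e^{\sigma \cup \tau}(x)\!\downarrow$, which yields only $C_i \tleq L \oplus 0' \tequiv 0'$, giving no contradiction. The $L$-hyperimmunity of $A \cap L$ must be invoked here to replace this $\Delta^0_2$ search by an $L$-computable one: partition $F$ $L$-computably into disjoint blocks $D_0 < D_1 < \cdots$ of growing size, so that by hyperimmunity infinitely many $D_n$ lie entirely in $\overline{A}$. The failure of Case~1 means that whenever $D_n \subseteq \overline{A}$ and $\Phi_e^{\sigma \cup D_n}(x)\!\downarrow$, the value must equal $C_i(x)$, and the failure of Case~2 forces, for every $x$, such a $D_n$ to exist with a convergent computation. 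A careful $L$-computable compactness argument over these blocks then recovers $C_i$ from $L$, contradicting $C_i \ntleq L$. Making this last maneuver precise---so that the $L$-reduction is well-defined in spite of the possible presence of ``bad'' blocks $D_n \not\subseteq \overline{A}$ on which the computed value may disagree with $C_i(x)$---is where I expect the real work of the proof to lie.
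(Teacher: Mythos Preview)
Your overall plan and your identification of the crux are both correct, but the block device you sketch does not close the gap, and the paper resolves it by a different dichotomy.

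The difficulty with your blocks is structural. Failure of your Case~2 only produces, for each $x$, some finite $\tau\subseteq F$ with $\Phi_e^{\sigma\cup\tau}(x)\!\downarrow$, not one confined to a single block, and the $L$-search for such a $\tau$ may always land in (or across) bad blocks $D_n\not\subseteq\overline A$. There is no $L$-computable way to filter these out, and neither majority voting nor a compactness argument helps without access to $A$. (Separately, your Case~2 as stated quantifies over \emph{all} infinite $L$-computable $F'\subseteq F$; this is not a search $0'$ can carry out.)

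The paper changes both cases. Its Case~1 asks not for a single diagonalizing extension but for an $e$-\emph{split} in $\overline A\cap L$: extensions $\gamma_s\sigma_0,\gamma_s\sigma_1\sqsubset\overline A\cap L$ with $\Phi_e^{\gamma_s\sigma_0}(m)\!\downarrow\neq\Phi_e^{\gamma_s\sigma_1}(m)\!\downarrow$ (then take the side disagreeing with $C_i(m)$). Its Case~2 asks for an $m$ such that every extension $\gamma_s 0^m\sigma\sqsubset L$ (note: all of $L$, not $\overline A\cap L$) either diverges at $m$ or gives a value $\neq C_i(m)$; if so, set $\gamma'_s=\gamma_s 0^m$. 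Now suppose both fail. Failure of Case~2 for every $m$ means each $m$ has an extension in $L$ computing $C_i(m)$; since $C_i\ntleq L$, infinitely many $m$ also have an extension in $L$ computing the wrong value, so one can $L$-computably enumerate an infinite set $M$ of $m$ admitting an $e$-split in $L$ above $\gamma_s 0^m$. Failure of Case~1 says this split does not lie entirely in $\overline A\cap L$, so one of its branches has a $1$ at a position in $A\cap L$, necessarily $\geq |\gamma_s|+m$ because of the $0^m$ prefix. Choosing $m_0<m_1<\cdots$ in $M$ far enough apart gives an $L$-computable $f$ with $[f(j),f(j+1))\cap A\cap L\neq\emptyset$ for all $j$, contradicting $L$-hyperimmunity. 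The two modifications are exactly what make this go through: the $e$-split form of Case~1 turns ``split in $L$ but not entirely in $\overline A\cap L$'' into a located element of $A\cap L$, and quantifying Case~2 over $L$ rather than $\overline A\cap L$ is what makes its failure $L$-enumerable. No reservoir $F$ is needed.
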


\begin{proof}
We build $Y$ by finite extensions; that is, we define $\gamma_0 \prec
\gamma_1 \prec \cdots$ and let $Y = \bigcup_i \gamma_i$.

For a string $\sigma$ and a set $X$, we write $\sigma \sqsubset X$ to
mean that $\{n < |\sigma| \mid \sigma(n) = 1\} \subseteq X$.

Begin with $\gamma_0$ defined as the empty sequence. At stage
$s=\langle e,i \rangle$, given the finite binary sequence $\gamma_s
\sqsubset \overline{A} \cap L$, we $0'$-computably search for either
\begin{enumerate}

\item an $m$ and extensions $\gamma_s\sigma_0$ and $\gamma_s\sigma_1$
such that $\Phi_e^{\gamma_s\sigma_0}(m)\!\downarrow \neq
\Phi_e^{\gamma_s\sigma_1}(m)\!\downarrow$ and $\gamma_s\sigma_k
\sqsubset \overline{A} \cap L$ for $k=0,1$; or

\item an $m$ such that for all extensions $\gamma_s0^m\sigma \sqsubset
L$, either $\Phi_e^{\gamma_s0^m\sigma}(m)\!\uparrow$ or
$\Phi_e^{\gamma_s0^m\sigma}(m)\!\downarrow \neq C_i(m)$.

\end{enumerate}

We claim one of these must be found. Suppose not. Then for every $m$
we can find an extension $\gamma_s0^m\sigma_0 \sqsubset L$ such that
$\Phi_e^{\gamma_s0^m\sigma_0}(m)\!\downarrow = C_i(m)$. Since $C_i
\ntleq L$, there must be infinitely many $m$ for which there is also
an extension $\gamma_s0^m\sigma_1 \sqsubset L$ such that
$\Phi_e^{\gamma_s0^m\sigma_1}(m)\!\downarrow \neq C_i(m)$. So we can
$L$-computably enumerate an infinite set $M$ such that for each $m \in
M$, there are $\gamma_s0^m\sigma_k \sqsubset L$ for $k=0,1$ such that
$\Phi_e(\gamma_s0^m\sigma_0)\!\downarrow \neq
\Phi_e(\gamma_s0^m\sigma_1)\!\downarrow$. Let $m \in M$. Since we are
assuming that case 1 above does not hold, there must be a $k$ such
that $\gamma_s0^m\sigma_k \not\sqsubset \overline{A} \cap L$. So
letting $l_m$ be the maximum of $|\gamma_s0^m\sigma_k|$ for $k=0,1$,
we are guaranteed the existence of an element of $A \cap L$ in the
interval $[m,l_m)$. Now we can find $m_0,m_1,\ldots \in M$ such that
$m_{j+1} > l_{m_j}$, and define $f(j)=m_j$. Then $f$ is a witness to
the non-$L$-hyperimmunity of $A \cap L$, contrary to hypothesis.

So one of the two cases above must eventually hold. If case 1 holds,
let $k$ be such that $\Phi_e^{\gamma_s\sigma_k}(m) \neq C_i(m)$ and
define $\gamma'_s = \gamma_s\sigma_k$. If case 2 holds, define
$\gamma'_s= \gamma_s0^m$. In either case, let $\gamma_{s+1} \sqsubset
\overline{A} \cap L$ be an extension of $\gamma'_s$ such that
$\gamma_{s+1}(j)=1$ for some $j>|\gamma_s|$. Such a string must exist
since $\gamma'_s \sqsubset \overline{A} \cap L$ and $\overline{A} \cap
L$ is infinite (as otherwise $A \cap L$ would be cofinite within $L$,
and hence not $L$-hyperimmune). This definition ensures that $\Phi_e^Y
\neq C_i$.
\end{proof}

We are now ready to prove the main result of this section.

\begin{thm} \label{incthm}
Let $A$ be $\Delta^0_2$ and let $C_0,C_1,\ldots \tgreater 0$ be
uniformly $\Delta^0_2$. Then either $A$ or $\overline{A}$ has an
infinite $\Delta^0_2$ subset $X$ such that $C_i \ntleq X$ for all $i$.
\end{thm}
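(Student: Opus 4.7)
The plan is to prove Theorem~\ref{incthm} by a dichotomy on the hyperimmunity of $A$, using Proposition~\ref{auxprop} in the hyperimmune case and combining Lawton's Theorem~\ref{lbt} with the relativized form of Proposition~\ref{auxprop} in the non-hyperimmune case.

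First, if $A$ is hyperimmune, I would apply Proposition~\ref{auxprop} with $L = \omega$: this is a legal choice, since $\omega$ is infinite and $\Delta^0_2$, each $C_i \ntleq \omega$ (because $C_i \tgreater 0$), and $A \cap \omega = A$ is $\omega$-hyperimmune, which is just hyperimmunity in the usual sense. The Proposition then yields an infinite $\Delta^0_2$ set $Y \subseteq \overline{A}$ with $C_i \ntleq Y$ for all $i$, as required. The case where $\overline{A}$ is hyperimmune is entirely symmetric and produces $Y \subseteq A$.

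Now suppose neither $A$ nor $\overline{A}$ is hyperimmune. Computable witnesses to both non-hyperimmunities can then be merged into a single computable increasing function $h$ such that each interval $[h(n), h(n+1))$ meets both $A$ and $\overline{A}$. The tree $T$ of finite sequences $\tau$ with $\tau(i) \in [h(i), h(i+1))$ for $i < |\tau|$ is computable, infinite, and computably bounded, so by Lawton's Theorem~\ref{lbt} it has an infinite low path $L$ with $C_i \ntleq L$ for every $i$. Since $L$ is low, hence $\Delta^0_2$, the relativized form of Proposition~\ref{auxprop} noted immediately after Theorem~\ref{lbt}, applied with $L$ in place of $\omega$, delivers the required $Y$ as soon as one of $A \cap L$ or $\overline{A} \cap L$ is $L$-hyperimmune.

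The main obstacle is guaranteeing this $L$-hyperimmunity hypothesis. My plan is to strengthen the Lawton-style forcing producing $L$ by interleaving, for each Turing reduction $\Phi_e$, a requirement that either ensures $\Phi_e^L$ does not witness non-$L$-hyperimmunity of $A \cap L$ (by restricting the current $\Pi^0_1$ subclass so that some interval $[\Phi_e^L(n), \Phi_e^L(n+1))$ is forced disjoint from $A \cap L$) or, when no such restriction is possible, extracts from the obstruction an infinite $L$-computable subset $X$ of $\overline{A}$; such an $X$ is low (hence $\Delta^0_2$), is contained in $\overline{A}$, and satisfies $C_i \ntleq X$ because $X \tleq L$ and $C_i \ntleq L$, so $X$ is itself the desired subset. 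Implementing this enriched forcing within a $0'$-oracle construction so that $L$ remains low, and verifying that these two alternatives genuinely exhaust the failure modes, is the technical heart of the argument.
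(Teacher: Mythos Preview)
Your hyperimmune case via Proposition~\ref{auxprop} with $L=\omega$ is fine, but the non-hyperimmune case has a real gap in the ``enriched forcing'' step. The dichotomy you propose---either restrict the $\Pi^0_1$ class so that some interval $[\Phi_e^L(n),\Phi_e^L(n+1))$ is forced disjoint from $A\cap L$, or else extract from the obstruction an infinite $L$-computable subset of $\overline{A}$---does not hold. If no such restriction is possible, what you learn is that along every extendible configuration each $\Phi_e$-interval meets $A\cap\rng(L)$; this locates elements of $A$, not of $\overline{A}$, and in any case you cannot $L$-computably pick such elements out, since $A$ is only $\Delta^0_2$ and need not be $L$-computable. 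There is also an effectivity problem you do not address: deciding which alternative obtains for a given $e$ asks whether a $0'$-computable search halts, which is $\Pi^0_2$ and hence unavailable to a straight $0'$-construction without approximation and injury.

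The paper's proof is organized quite differently and confronts exactly this $0''$-level obstacle head-on. It assumes outright that $\overline{A}$ has no infinite $\Delta^0_2$ subset avoiding all the $C_i$; then Proposition~\ref{auxprop} in contrapositive guarantees, for every low $L$ with $C_i\ntleq L$, that $A\cap L$ is \emph{not} $L$-hyperimmune. The non-hyperimmunity witness $g$ is used to build an $L$-computable, $L$-computably bounded tree whose paths code guesses at $A$-elements; pruning this tree either produces a finite-stage diagonalization against $C_i$, or leaves an infinite tree to which the relativized Lawton theorem is applied to obtain a new low $L'$ handling one requirement $R_{e,i}$. This is iterated over all $\langle e,i\rangle$, and $X\subseteq A$ is built directly along the way. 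The crux is that an index for the witness $g$ is only $0''$-computable from a lowness index for $L$, so the construction must guess at these indices and absorb injuries when the guesses change---a genuine finite-injury priority argument. Your single-$L$-then-one-shot-Proposition~\ref{auxprop} architecture does not accommodate this, and the missing dichotomy is precisely where the injury mechanism would have to enter.
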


\begin{proof}
Assume that $\overline{A}$ has no infinite $\Delta^0_2$ subset $Y$
such that $C_i \ntleq Y$ for all $i$. We use Proposition
\ref{auxprop} to build an infinite $\Delta^0_2$ set $X$ such that $C_i
\ntleq X$ for all $i$, via a $0'$-computable construction satisfying
the following requirements:
$$
R_{e,i} : \Phi_e^X \mbox{ total} \;\; \Rightarrow\; \exists
n\,(\Phi_e^X(n) \neq C_i(n)).
$$

We first discuss how to satisfy the single requirement $R_{0,0}$. By
Proposition \ref{auxprop} (with $L=\omega$), $A$ is not
hyperimmune. Suppose we have a computable function $f$ witnessing the
non-hyperimmunity of $A$. Let the computable, computably bounded tree
$\widehat{T}$ consist of the nodes $(m_0,\ldots,m_{k-1})$ with $f(j)
\leq m_j < f(j+1)$ for all $j<k$. Such a node represents a guess that
$m_j \in A$ for each $j<k$. Note that the choice of $f$ ensures that
$\widehat{T}$ has at least one path along which all such guesses are
correct.

Now prune $\widehat{T}$ as follows. For each node $\sigma =
(m_0,\ldots,m_{k-1})$, if there are nonempty $F_0,F_1 \subseteq
\rng(\sigma)$ and an $n$ such that $\Phi^{F_0}_0(n)\!\downarrow \neq
\Phi^{F_1}_0(n)\!\downarrow$ with uses bounded by the largest element
of $F_0 \cup F_1$, then prune $\widehat{T}$ to ensure that $\sigma$ is
not extendible to an infinite path. Note that we can do this pruning
in such a way as to end up with a computable tree $T$.

Now $0'$ can determine whether $T$ is finite. If so, then we can find
a leaf $\sigma$ of $T$ such that $\rng(\sigma) \subset A$. There are
nonempty $F_0,F_1 \subseteq \rng(\sigma)$ and an $n$ such that
$\Phi^{F_0}_0(n)\!\downarrow \neq \Phi^{F_1}_0(n)\!\downarrow$ with
uses bounded by the largest element $z$ of $F_0 \cup F_1$, so if we
let $k$ be such that $\Phi^{F_k}_0(n) \neq C_0(n)$ and define $X$ so
that $X \uhr z+1 = F_k \uhr z+1$, then we ensure that $\Phi_0^X(n)
\neq C_0(n)$.

On the other hand, if $T$ is infinite then by Theorem \ref{lbt}, $0'$
can find a low path $P$ of $T$ such that $C_i \ntleq P$ for all
$i$. There must be an $n$ such that either $\Phi_0^Y(n)\!\uparrow$ for
every $Y \subseteq \rng(P)$ or there is a $Y \subseteq \rng(P)$ such
that $\Phi_0^Y(n)\!\downarrow \neq C_0(n)$, since otherwise we could
$P$-compute $C_0(n)$ for each $n$ by searching for a finite $F \subset
\rng(P)$ such that $\Phi_0^F(n)\!\downarrow$. But by the construction
of $T$, this means that there is an $n$ such that for every infinite
$Y \subseteq \rng(P)$, either $\Phi_0^Y(n)\!\uparrow$ or
$\Phi_0^Y(n)\!\downarrow \neq C_0(n)$. So if we now promise to make $X
\subseteq \rng(P)$, we ensure that $\Phi_0^X \neq C_0$. Notice that we
can make such a promise because $C_i \ntleq P$ for all $i$, and hence
$C_i \ntleq \rng(P) $ for all $i$ (since $P$ is an increasing
sequence), which implies that $A \cap \rng(P)$ is infinite.

Let us now consider how to satisfy another requirement, say
$R_{0,1}$. The action taken to satisfy $R_{0,0}$ results in either a
finite initial segment of $X$ being determined, or a promise being
made to keep $X$ within a given infinite low set that does not compute
any of the $C_i$. We can handle both cases at once by assuming that we
have a number $r_1$ and an infinite low set $L_1$ containing the
finite set $F_1$ of numbers less than $r_1$ currently in $X$, such
that $C_i \ntleq L_1$ for all $i$. We want $X \uhr r_1 = F_1$ and $X
\subseteq L_1$.

Suppose that we have an $L_1$-computable function $g$ witnessing the
non-$L_1$-hyperimmunity of $A \cap L_1$. We can then proceed much as
we did for $R_{0,0}$, but taking $r_1$ and $L_1$ into account, in the
following way. We can assume that $g(0) \geq r_1$. Define
$\widehat{T}$ to consist of the nodes $(m_0,\ldots,m_{k-1})$ with
$g(j) \leq m_j < g(j+1)$ and $m_j \in L_1$ for all $j<k$. For each
node $\sigma = (m_0,\ldots,m_{k-1})$, if there are nonempty $G_0,G_1
\subseteq \rng(\sigma)$ and an $n$ such that $\Phi^{F_1 \cup
G_0}_1(n)\!\downarrow \neq \Phi^{F_1 \cup G_1}_1(n)\!\downarrow$ with
uses bounded by the largest element of $G_0 \cup G_1$, then prune
$\widehat{T}$ to ensure that $\sigma$ is not extendible to an infinite
path, thus obtaining a new $L_1$-computable tree $T$.

If $T$ is finite then find a leaf $\sigma$ of $T$ such that
$\rng(\sigma) \subset A \cap L_1$. Then there is a nonempty $G
\subseteq \rng(\sigma)$ and an $n$ such that $\Phi^{F_1 \cup
G}_1(n)\!\downarrow \neq C_0(n)$ with use bounded by the largest
element $z$ of $G$, so if we define $X$ such that $X \uhr z+1 = (F_1
\cup G) \uhr z+1$ then we ensure that $\Phi_1^X(n) \neq C_0(n)$.

If $T$ is infinite then $0'$ can find a low path $P$ of $T$. If we now
promise that all future elements of $X$ will be in $\rng(P)$, we
ensure that $\Phi_1^X \neq C_0$ as before. Notice that we can make
such a promise because $\rng(P) \subseteq L_1$ and, as before, $A \cap
\rng(P)$ is infinite.

Thus we can satisfy $R_{0,1}$, and the action we take results in a
number $r_2$ and an infinite low set $L_2$ that does not compute any
of the $C_i$ (and contains the finite set $F_2$ of numbers less than
$r_2$ currently in $X$) such that we want $X \uhr r_2 = F_2$ and $X
\subseteq L_2$. In other words, we are in the same situation we were
in after satisfying $R_{0,0}$, and we could now proceed to satisfy
another requirement as we did $R_{0,1}$.

However, there is a crucial problem with proceeding in this way for
all the $R_{e,i}$ at once, which is that we know no $0'$-computable
way to determine the witnesses to non-hyperimmunity required by the
construction. The best we can do is guess at them. That is, we have a
$0''$-partial computable function $w$ such that if $l$ is a lowness
index for an infinite set $L$ (that is, $\Phi_l^{0'} = L'$) then
$\Phi^L_{w(l)}$ witnesses the non-$L$-hyperimmunity of $A \cap L$.
\smallskip

We are now ready to describe our construction. We give our
requirements a priority ordering by saying that $R_{e,i}$ is stronger
than $R_{e',i'}$ if $\langle e,i \rangle < \langle e',i' \rangle$. All
numbers added to $X$ at a stage $s$ of our construction will be
greater than $s$, thus ensuring that $X \tleq \emptyset'$. Let $X_s$
be the set of numbers added to $X$ by the beginning of stage $s$.

Throughout the construction, we run a $0'$-approximation to
$w$. Associated with each $R_{e,i}$ are a number $r_{\langle e,i
\rangle}$ and a low set $L_{\langle e,i \rangle}$ with lowness index
$l_{\langle e,i \rangle}$ (all of which might change during the
construction). If the approximation to $w(l_{\langle e,i \rangle})$
changes, then for all $\langle e',i' \rangle \geq \langle e,i \rangle$
the strategy for $R_{e',i'}$ is immediately canceled, $R_{e',i'}$ is
declared to be unsatisfied, and $r_{\langle e',i' \rangle}$,
$L_{\langle e',i' \rangle}$, and $l_{\langle e',i' \rangle}$ are reset
to the current values of $r_{\langle e,i \rangle}$, $L_{\langle e,i
\rangle}$, and $l_{\langle e,i \rangle}$, respectively. It is
important to note that the approximation to $w$ continues to run
during the action of a strategy at a fixed stage. That is, we may find
a change in the approximation to some $w(l_{\langle e,i \rangle})$
with $\langle e,i \rangle \leq \langle e',i' \rangle$ in the middle of
a stage $s$ at which we are trying to satisfy $R_{e',i'}$. If this
happens then we immediately end the stage and cancel strategies as
described above.

Initially, all requirements are unsatisfied. At the beginning of stage
$0$, for every $e,i$, let $r_{\langle e,i \rangle}=0$ and $L_{\langle
e,i \rangle}=\omega$, and let $l_{\langle e,i \rangle}$ be a fixed
lowness index for $\omega$.

At stage $s$, let $R_{e,i}$ be the strongest unsatisfied requirement
and proceed as follows.

We have a number $r_{\langle e,i \rangle}$ and a low set $L_{\langle
e,i \rangle}$ with lowness index $l_{\langle e,i \rangle}$, such that
$L_{\langle e,i \rangle}$ contains $X_s \uhr r_{\langle e,i \rangle}$,
and $C_j \ntleq L_{\langle e,i \rangle}$ for all $j$. As before, we
want to ensure that $X \uhr r_{\langle e,i \rangle} = X_s \uhr
r_{\langle e,i \rangle}$ and $X \subseteq L_{\langle e,i
\rangle}$. Let $v$ be the current approximation to $w(l_i)$ and let
$g=\Phi^{L_{\langle e,i \rangle}}_v$. By shifting the values of $g$ if
necessary, we can assume that $g(0) \geq \max(r_{\langle e,i
\rangle},s)$. Define $\widehat{T}$ to consist of the nodes
$(m_0,\ldots,m_{k-1})$ with $g(j) \leq m_j < g(j+1)$ and $m_j \in
L_{\langle e,i \rangle}$ for all $j<k$. Note that $g$ may not be
total, in which case $\widehat{T}$ is finite.

For each node $\sigma = (m_0,\ldots,m_{k-1})$, if there are nonempty
$G_0,G_1 \subseteq \sigma$ and an $n$ such that $\Phi^{X \uhr r_i \cup
G_0}_e(n)\!\downarrow \neq \Phi^{X \uhr r_i \cup
G_1}_e(n)\!\downarrow$ with uses bounded by the largest element of
$G_0 \cup G_1$, then prune $\widehat{T}$ to ensure that $\sigma$ is
not extendible to an infinite path, thus obtaining a new $L_{\langle
e,i \rangle}$-computable tree $T$.

We want to $0'$-effectively determine whether $T$ is finite. More
precisely, the question we ask $0'$ is whether the pruning process
described above ever results in all the nodes at some level of
$\widehat{T}$ becoming non-extendible. A positive answer means $T$ is
finite. If $g$ is total then a negative answer means $T$ is
infinite. However, if $g$ is not total, so that $\widehat{T}$ is
finite, we may still get a negative answer, because the pruning
process may get stuck waiting forever for a level of $\widehat{T}$ to
become defined.

If the answer to our question is positive, then look for a leaf
$\sigma$ of $T$ such that $\rng(\sigma) \subset A \cap L_{\langle e,i
\rangle}$. If no such leaf exists, then either $L_{\langle e,i
\rangle}$ is finite or $v \neq w(l_{\langle e,i \rangle})$, so end the
stage and cancel the strategies for $R_{\langle e',i' \rangle}$ with
$\langle e',i' \rangle \geq \langle e,i \rangle$ as described
above. (That is, declare $R_{\langle e',i' \rangle}$ to be
unsatisfied, and reset $r_{\langle e',i' \rangle}$, $L_{\langle e',i'
\rangle}$, and $l_{\langle e',i' \rangle}$ to the current values of
$r_{\langle e,i \rangle}$, $L_{\langle e,i \rangle}$, and $l_{\langle
e,i \rangle}$, respectively.) Otherwise, there are a nonempty $G
\subseteq \rng(\sigma)$ and an $n$ such that $\Phi^{X \uhr r_i \cup
G}_e(n)\!\downarrow \neq C_i(n)$ with use bounded by the largest
element of $G$. Let $r_{\langle e,i \rangle+1}$ be the largest element
of $G$, let $L_{\langle e,i \rangle+1}=L_{\langle e,i \rangle}$, and
let $l_{\langle e,i \rangle+1}=l_{\langle e,i \rangle}$. Put every
element of $G$ into $X$.

If the answer to our question is negative, then use the relativized
form of Theorem \ref{lbt} to $0'$-effectively obtain a low path $P$ of
$T$ such that $C_j \ntleq P$ for all $j$, and a lowness index
$l_{\langle e,i \rangle+1}$ for $L_{\langle e,i \rangle+1} = X \uhr
r_i\, \cup\, \rng(P)$. If $g$ is not total, then the construction in
the proof of Theorem \ref{lbt} will still produce such an $L_{\langle
e,i \rangle+1}$ and $l_{\langle e,i \rangle+1}$, but $L_{\langle e,i
\rangle+1}$ may be finite. (Which will of course be a problem for
weaker priority requirements, but in this case the strategy for
$R_{e,i}$ will eventually be canceled, and hence $L_{\langle e,i
\rangle+1}$ will eventually be redefined.) Let $r_{\langle e,i
\rangle+1}=r_{\langle e,i \rangle}$. Search for an element of $A \cap
L_{\langle e,i \rangle+1}$ greater than $\max\{r_{\langle e',i'
\rangle} \mid \langle e',i' \rangle \leq \langle e,i \rangle\}$ not
already in $X$ and put this number into $X$. If $L_{\langle e,i
\rangle+1}$ is infinite, such a number must be found. Otherwise, such
a number may not exist, but this situation can only happen if the
approximation to $w(l_{\langle e',i' \rangle})$ at the beginning of
stage $s$ is incorrect for some $\langle e',i' \rangle \leq \langle
e,i \rangle$, in which case the strategy for $R_{e,i}$ will be
canceled, and the stage ended as described above.

In either case, if the action of the strategy for $R_{e,i}$ has not
been canceled, then declare $R_{e,i}$ to be satisfied, and for
$\langle e',i' \rangle > \langle e,i \rangle$, declare $R_{e',i'}$ to
be unsatisfied, let $r_{\langle e',i' \rangle+1}=r_{\langle e,i
\rangle+1}$, let $L_{\langle e',i' \rangle+1}=L_{\langle e,i
\rangle+1}$, and let $l_{\langle e',i' \rangle+1}=l_{\langle e,i
\rangle+1}$.
\smallskip

This completes the construction. Since every element entering $X$ at
stage $s$ is in $A$ and is greater than $s$, we have that $X$ is a
$\Delta^0_2$ subset of $A$. Furthermore, at each stage a number is
added to $X$ unless the strategy acting at that stage is canceled, so
once we show that every requirement is eventually permanently
satisfied, we will have shown that $X$ is infinite.

Assume by induction that for all $\langle e',i' \rangle < \langle e,i
\rangle$, the requirement $R_{e',i'}$ is eventually permanently
satisfied, and that $r_{\langle e,i \rangle}$, $L_{\langle e,i
\rangle}$, and $l_{\langle e,i \rangle}$ eventually reach a final
value, for which $L_{\langle e,i \rangle}$ is infinite. Let $s$ be the
least stage by which this situation obtains and the approximation to
$w(l_{\langle e,i \rangle})$ has settled to a final value $v$. Note
that at stage $s-1$, either the strategy for some $R_{\langle e',i'
\rangle}$ with $\langle e',i' \rangle < \langle e,i \rangle$ acted, or
the approximation to $w(l_{\langle e,i \rangle})$ changed, so at the
beginning of stage $s$, it must be the case that $R_{e,i}$ is the
strongest unsatisfied requirement. Thus at that stage the strategy for
$R_{e,i}$ acts, and the function $g=\Phi^{L_{\langle e,i \rangle}}_v$
it works with at that stage is in fact a witness to the
non-$L_{\langle e,i \rangle}$-hyperimmunity of $A \cap L_{\langle e,i
\rangle}$. Thus $R_{e,i}$ will become satisfied at the end of the
stage, and $r_{\langle e,i \rangle+1}$, $L_{\langle e,i \rangle+1}$,
and $l_{\langle e,i \rangle+1}$ will not be redefined after the end of
the stage.

If the tree $T$ built at stage $s$ is finite, then a leaf $\sigma$ of
$T$ is found such that $\rng(\sigma) \subset A \cap L_{\langle e,i
\rangle}$, and there are a nonempty $G \subseteq \rng(\sigma)$ and an
$n$ such that $\Phi^{X \uhr r_{\langle e,i \rangle} \cup
G}_e(n)\!\downarrow \neq C_i(n)$ with use bounded by the largest
element $r_{\langle e,i \rangle+1}$ of $G$. Since $r_{\langle e,i
\rangle+1}$ is never again redefined, $\Phi^X_e(n) \neq C_i(n)$, and
thus the requirement $R_{e,i}$ is satisfied. Furthermore, $L_{\langle
e,i \rangle+1}$ is defined to be $L_{\langle e,i \rangle}$ at this
stage, and hence is infinite.

If $T$ is infinite, then $L_{\langle e,i \rangle+1}$ is defined to
contain the range of a path of $T$, and hence is
infinite. Furthermore, $L_{\langle e,i \rangle+1}$ is never redefined,
and by the way $X$ is defined, $X \subseteq A \cap L_{\langle e,i
\rangle+1}$. There must be an $n$ such that either
$\Phi_e^Y(n)\!\uparrow$ for every $Y \subseteq L_{\langle e,i
\rangle+1}$ or there is a $Y \subseteq L_{\langle e,i \rangle+1}$ such
that $\Phi_0^Y(n)\!\downarrow \neq C_i(n)$, since otherwise we could
$L_{\langle e,i \rangle+1}$-compute $C_i(n)$ for each $n$ by searching
for a finite $F \subset L_{\langle e,i \rangle+1}$ such that
$\Phi_e^F(n)\!\downarrow$. But by the definition of $T$, this means
that there is an $n$ such that for every infinite $Y \subseteq
L_{\langle e,i \rangle+1}$, either $\Phi_e^Y(n)\!\uparrow$ or
$\Phi_e^Y(n)\!\downarrow \neq C_i(n)$. So since $X \subseteq
L_{\langle e,i \rangle+1}$, we have $\Phi^X_e \neq C_i$, and hence the
requirement $R_{e,i}$ is satisfied.
\end{proof}

Theorem \ref{incthm} gives the negative answer to Mileti's question
mentioned above.

\begin{cor}[Hirschfeldt]
Every $\Delta^0_2$ set has an incomplete infinite $\Delta^0_2$ subset
of either it or its complement. In other words, every computable
stable $2$-coloring of $[\mathbb N]^2$ has an incomplete $\Delta^0_2$
infinite homogeneous set.
\end{cor}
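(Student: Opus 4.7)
The plan is to deduce the corollary directly from Theorem \ref{incthm} by a careful choice of the sequence $C_0,C_1,\ldots$, and then to translate between $\Delta^0_2$ sets and computable stable $2$-colorings using the correspondence recalled in the introduction.

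First I would address the set-theoretic statement. Let $A$ be an arbitrary $\Delta^0_2$ set. Take the constant sequence $C_i = \emptyset'$ for all $i$. This sequence is uniformly $\Delta^0_2$ (the uniform $\Delta^0_2$ approximation is simply any fixed $\Delta^0_2$ approximation to $\emptyset'$, independent of $i$), and each $C_i \tgreater 0$ since $\emptyset'$ is noncomputable. Now apply Theorem \ref{incthm} to $A$ and this sequence to obtain an infinite $\Delta^0_2$ subset $X$ of either $A$ or $\overline A$ such that $\emptyset' \ntleq X$. The condition $\emptyset' \ntleq X$ is exactly the statement that $X$ is not Turing complete, so $X$ is the desired incomplete infinite $\Delta^0_2$ subset of $A$ or $\overline A$.

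For the reformulation in terms of computable stable colorings, let $C$ be a computable stable $2$-coloring of $[\mathbb N]^2$. By the correspondence recalled in the introduction (see \cite[Lemma 3.5]{CJS} or \cite[Claim 5.1.3]{Mileti}), there is a $\Delta^0_2$ set $A$ such that any infinite $B$ with $B \subseteq A$ or $B \subseteq \overline A$ computes an infinite homogeneous set for $C$. Apply the first part of the corollary to this $A$ to obtain an incomplete infinite $\Delta^0_2$ set $X$ contained in $A$ or $\overline A$. Then $X$ computes an infinite homogeneous set $H$ for $C$. Since $H \tleq X \tless 0'$, the set $H$ is $\Delta^0_2$ and incomplete, as required.

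There is essentially no obstacle here: the work is all done in Theorem \ref{incthm}, and the corollary is a one-line application. The only thing worth double-checking is that the constant sequence $C_i = \emptyset'$ satisfies the hypotheses of Theorem \ref{incthm} (uniformly $\Delta^0_2$ and each strictly above $\mathbf 0$), which is immediate. One could equivalently apply the theorem with a single set $C_0 = \emptyset'$, but phrasing it as a sequence avoids any ambiguity about whether Theorem \ref{incthm} is stated for genuinely infinite sequences.
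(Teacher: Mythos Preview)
Your proposal is correct and matches the paper's approach: the paper gives no explicit proof, merely stating that ``Theorem \ref{incthm} gives the negative answer to Mileti's question,'' and your choice $C_i=\emptyset'$ is exactly the intended one-line application. Your translation to stable colorings via the correspondence from the introduction is also fine.
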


We can improve on this result by using the following unpublished
result due to Jockusch.

\begin{prop}[Jockusch] \label{propgen}
Let $Z$ be hyperimmune. Then there is a $1$-generic $G \tleq Z \oplus
0'$ such that $Z \subseteq G$.
\end{prop}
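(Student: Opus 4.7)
The plan is to build $G$ by a finite-extension construction computable from $Z \oplus 0'$, maintaining at each stage an initial segment $\sigma_e \in 2^{<\omega}$ that is \emph{$Z$-consistent}, meaning $\sigma_e(n) = 1$ for every $n \in Z$ with $n < |\sigma_e|$. At each stage we will extend by at least one bit (setting the new value according to $Z$), so that $|\sigma_e| \to \infty$ and hence $Z \subseteq G := \bigcup_e \sigma_e$.

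At stage $e$, to handle the $e$-th c.e.\ set $W_e$ of binary strings, we will run two $Z \oplus 0'$-searches in parallel: search (i) looks for a $Z$-consistent $\tau \succeq \sigma_e$ with $\tau \in W_e$ (to meet $W_e$), and search (ii) looks for a $Z$-consistent $\tau \succeq \sigma_e$ with no extension in $W_e$ (to avoid $W_e$). $Z$-consistency is $Z$-decidable, $\tau \in W_e$ is $\Sigma^0_1$, and ``no extension in $W_e$'' is $\Pi^0_1$, so both searches are effective in $Z \oplus 0'$. We then set $\sigma_{e+1}$ to be the first witness $\tau$ found, padded by one further $Z$-consistent bit to guarantee $|\sigma_{e+1}| > |\sigma_e|$. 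Granted this, $G \tleq Z \oplus 0'$, $Z \subseteq G$, and $G$ is $1$-generic follow immediately, since each $W_e$ is either met (by the $\tau$ from (i), which is an initial segment of $G$) or avoided (by $\sigma_{e+1}$ from (ii), no extension of which lies in $W_e$).

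The heart of the matter, and the only place hyperimmunity enters, is showing that at least one of the two searches must terminate. Suppose neither does; then (a) no $Z$-consistent $\tau \succeq \sigma_e$ lies in $W_e$, and (b) every $Z$-consistent $\tau \succeq \sigma_e$ has some extension in $W_e$. For each $k \geq 0$, the string $\sigma_e 1^k$ (obtained by appending $k$ ones to $\sigma_e$) is $Z$-consistent, so by (b) it has a first extension $\tau_k \in W_e$, which we locate computably by enumerating $W_e$, and which by (a) must be $Z$-inconsistent. Since $\sigma_e$ is $Z$-consistent and the appended positions $[|\sigma_e|,|\sigma_e|+k)$ already carry $1$'s, the offending $Z$-element in $\tau_k$ must lie in $[|\sigma_e|+k, |\tau_k|)$. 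Setting $f(0) = |\sigma_e|$ and $f(n+1) = |\tau_{f(n) - |\sigma_e|}|$ now produces a computable strictly increasing function $f$ with $[f(n), f(n+1)) \cap Z \neq \emptyset$ for all $n$, contradicting the hyperimmunity of $Z$. The main obstacle is recognizing the right family of probes ($\sigma_e 1^k$ for varying $k$) to turn the failure of both searches into a witness to non-hyperimmunity; once that pigeonhole is in place, the rest is bookkeeping.
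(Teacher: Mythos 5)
Your proof is correct and follows essentially the same route as the paper's: a $Z \oplus 0'$-computable finite-extension construction that keeps each initial segment consistent with $Z$, probes with the strings $\sigma_e 1^k$, and turns the simultaneous failure of meeting and avoiding $W_e$ into a computable increasing function all of whose intervals meet $Z$, contradicting hyperimmunity. The only difference is organizational: the paper builds the auxiliary function $f$ inside the construction and searches for a $Z$-free interval (or an undefined value of $f$), whereas you search directly for a $Z$-consistent meeting or avoiding witness and push the hyperimmunity argument into the verification that one of the two searches halts.
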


\begin{proof}
We build $G$ by finite extensions; that is, we define $\gamma_0 \prec
\gamma_1 \prec \cdots$ and let $G = \bigcup_i \gamma_i$. Let
$S_0,S_1,\ldots$ be an effective listing of all c.e.\ sets of finite
binary sequences.

Begin with $\gamma_0$ defined as the empty sequence. At stage $i$,
given the finite binary sequence $\gamma_i$, search for an extension
$\alpha \in S_i$ of $\gamma_i1^{f(n)}$. If one is found then let
$f(n+1)=|\alpha|$.

If $f$ is total then, since $Z$ is hyperimmune, there is an $n$ such
that the interval $[|\gamma_i|+f(n),|\gamma_i|+f(n+1))$ contains no
element of $Z$. So $Z \oplus 0'$-computably search for either such an
interval or for an $n$ such that $f(n+1)$ is undefined. In the first
case, let $\alpha$ be as above and let $\gamma_{i+1}=\alpha$. In the
second case, let $\gamma_{i+1}=\gamma_i1^{f(n)}$.

It is now easy to check by induction that $Z \subseteq G$ and that $G$
meets or avoids each $S_i$.
\end{proof}

\begin{cor} \label{gencor}
Let $X \subset Y$ be such that $X$ is $Y$-hyperimmune. Then there are
$G,H \tleq X \oplus Y'$ such that
\begin{enumerate}

\item $H \tleq G \oplus Y$,

\item $G$ is $1$-generic relative to $Y$,

\item $X \subseteq H \subset Y$, and

\item $Y \setminus H$ is infinite.

\end{enumerate}
\end{cor}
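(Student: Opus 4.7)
The plan is to apply Proposition \ref{propgen} in its relativization to $Y$, taking $Z=X$. Since $X$ is $Y$-hyperimmune, the relativized proposition produces a set $G \tleq X \oplus Y'$ that is $1$-generic relative to $Y$ and satisfies $X \subseteq G$. I then define $H = G \cap Y$.

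With these definitions, clauses (1)--(3) are nearly immediate. For (1), $H = G \cap Y \tleq G \oplus Y$, and since $Y \tleq Y'$ and $G \tleq X \oplus Y'$, we also get $H \tleq X \oplus Y'$, so $G,H \tleq X \oplus Y'$ as required. Clause (2) is exactly what the relativized Proposition \ref{propgen} provides. For (3), $X \subseteq G$ and $X \subseteq Y$ give $X \subseteq G \cap Y = H$, while $H \subseteq Y$ is trivial; strictness $H \subset Y$ will follow from (4).

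The only step requiring real work is (4), that $Y \setminus H$ is infinite. For each $n \in \omega$, consider the $Y$-computable (hence $Y$-c.e.) set of finite binary strings
\[
S_n = \{ \sigma : (\exists y)(\, n \leq y < |\sigma|\ \wedge\ y \in Y\ \wedge\ \sigma(y) = 0\,)\}.
\]
Because $Y$ is infinite (it contains the infinite set $X$), every finite binary string $\gamma$ can be extended to a string in $S_n$: pick $y \in Y$ with $y \geq \max(n,|\gamma|)$ and extend $\gamma$ by $0$'s to a string of length $y+1$. Thus no initial segment of $G$ avoids $S_n$, so by $1$-genericity of $G$ relative to $Y$, some initial segment of $G$ lies in $S_n$. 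This produces $y \geq n$ with $y \in Y$ and $G(y)=0$. Hence $Y \setminus G$, and therefore $Y \setminus H$, is infinite; in particular $H \subset Y$.

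I do not anticipate any real obstacle: the substantive content is already encapsulated in Proposition \ref{propgen}, and the only new ingredient is the simple density argument applied to the $Y$-c.e. sets $S_n$, which forces infinitely many zeros of $G$ to land inside $Y$.
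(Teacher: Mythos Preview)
Your proof is correct, but it differs from the paper's argument in the choice of $H$. The paper first pulls $X$ back through the increasing enumeration $h$ of $Y$: it sets $Z=h^{-1}(X)$, checks that $Z$ is $Y$-hyperimmune (a one-line consequence of the hypothesis, using $h\tleq Y$), applies the relativized Proposition~\ref{propgen} to $Z$, and then pushes forward by setting $H=h(G)$. With that definition $H\subseteq Y$ is automatic and $Y\setminus H=h(\overline G)$, so clause~(4) follows immediately from the fact that a $1$-generic has infinite complement---no separate density argument is needed. Your route is more direct in that it applies the proposition to $X$ itself and avoids the pull-back/push-forward, at the cost of having to run the small genericity argument with the sets $S_n$ to force infinitely many zeros of $G$ into $Y$. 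Both arguments are short; the paper's buys clause~(4) for free via the coding by $h$, while yours keeps the objects simpler and pays for~(4) with a standard density computation.
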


\begin{proof}
Let $h(0)<h(1)<\cdots$ be the elements of $Y$, and let
$Z=h^{-1}(X)$. By Proposition \ref{propgen} relativized to $Y$, there
is a $G \tleq Z \oplus Y'$ such that $G$ is $1$-generic relative to
$Y$ and $Z \subseteq G$. Let $H=h(G)$. Since $h \tleq Y$ and $h$ is
increasing, we have $H \tleq G \oplus Y$, and $X \subseteq H \subset
Y$ by the definition of $h$. Finally, $Y \setminus H =
h(\overline{G})$, and hence is infinite.
\end{proof}

\begin{cor} \label{hypcor}
Let $A$ be a $\Delta^0_2$ set such that $\overline{A}$ has no infinite
low subset, and let $L$ be low. Then $A \cap L$ is not
$L$-hyperimmune.
\end{cor}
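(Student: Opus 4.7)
The plan is to prove this by contradiction, using Corollary \ref{gencor} as the main engine. Suppose for contradiction that $A \cap L$ is $L$-hyperimmune. I would apply Corollary \ref{gencor} with $X = A \cap L$ and $Y = L$ to obtain sets $G$ and $H$ with $H \leq_T G \oplus L$, $G$ being $1$-generic relative to $L$, $A \cap L \subseteq H \subset L$, and $L \setminus H$ infinite. The strategy is to exhibit $L \setminus H$ as an infinite low subset of $\overline{A}$, directly contradicting the hypothesis on $A$.

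First I would verify that $L \setminus H \subseteq \overline{A}$. Since $A \cap L \subseteq H$, any element of $L$ not in $H$ must fail to lie in $A$, so $L \setminus H \subseteq L \cap \overline{A} \subseteq \overline{A}$. Infinitude of $L \setminus H$ is given by Corollary \ref{gencor}.

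The main technical step is proving that $L \setminus H$ is low, and this is where I would expect the real work to lie. Since $A$ is $\Delta^0_2$ and $L$ is low, both $A \cap L$ and $L'$ are computable from $0'$, and hence from Corollary \ref{gencor} we get $G \leq_T (A \cap L) \oplus L' \leq_T 0'$. Because $G$ is $1$-generic relative to $L$, the standard fact about $n$-generics yields $(G \oplus L)' \leq_T G \oplus L'$, and the right-hand side is computable from $0'$ (using lowness of $L$ and $G \leq_T 0'$). Hence $G \oplus L$ is low. Since $H \leq_T G \oplus L$ and $L \leq_T G \oplus L$, the set $L \setminus H$ is also computable from $G \oplus L$ and is therefore low.

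This produces an infinite low subset of $\overline{A}$, contradicting the hypothesis, and completes the proof. The main obstacle I anticipate is packaging the lowness argument cleanly: one needs the standard jump-control result for $1$-generics relative to an oracle, and care is required because $G$ itself is only computable from $0'$ (not from $L$), so the bound $(G \oplus L)' \leq_T G \oplus L'$ has to be combined with $G \leq_T 0'$ and $L' \leq_T 0'$ rather than simply applied to get lowness relative to $L$.
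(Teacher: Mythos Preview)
Your proposal is correct and follows essentially the same route as the paper: both argue by contradiction, apply Corollary~\ref{gencor} with $X=A\cap L$ and $Y=L$, use $A\cap L,\,L'\tleq 0'$ to get $G\tleq 0'$, invoke the standard jump bound $(G\oplus L)'\tleq G\oplus L'$ for relative $1$-generics to conclude $G\oplus L$ is low, and finish by observing that $L\setminus H$ is an infinite low subset of $\overline{A}$. You simply spell out a few steps (e.g., $L\setminus H\subseteq\overline{A}$ and the jump inequality) that the paper leaves implicit.
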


\begin{proof}
Suppose that $A \cap L$ is $L$-hyperimmune. We can apply Corollary
\ref{gencor} to $X=A \cap L$ and $Y=L$ to obtain $G$ and $H$ as
above. Since $A \cap L$ and $L'$ are both $\Delta^0_2$, so is
$G$. Since $G$ is also $1$-generic relative to $L$, and $L$ is low, $G
\oplus L$ is low. But $H \oplus L \tleq G \oplus L$, and hence $H
\oplus L$ is low. Thus $L \setminus H$ is an infinite low subset of
$\overline{A}$, which is a contradiction.
\end{proof}

\begin{cor}[Hirschfeldt] \label{original}
Let $A$ be a $\Delta^0_2$ set such that $\overline{A}$ has no infinite
low subset. Then $A$ has an incomplete infinite $\Delta^0_2$ subset.
\end{cor}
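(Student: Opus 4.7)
The plan is to specialize the construction from the proof of Theorem \ref{incthm}, taking $\vec C$ to consist of the single set $C_0 = 0'$, which is $\Delta^0_2$ and satisfies $0' \tgreater 0$. The requirements then reduce to $R_e: \Phi_e^X \neq 0'$, and any $X \subseteq A$ satisfying all of them is automatically incomplete.

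Examining that construction, one sees that its standing hypothesis---that $\overline{A}$ has no infinite $\Delta^0_2$ subset $Y$ with $C_i \ntleq Y$ for all $i$---is used solely to invoke (the contrapositive of) Proposition \ref{auxprop}, in order to guarantee that for every low set $L$ arising in the construction, $A \cap L$ is not $L$-hyperimmune. This non-hyperimmunity is precisely what makes the $0''$-approximation to the partial function $w$ meaningful and the construction succeed. Under the weaker present hypothesis that $\overline{A}$ has no infinite low subset, Corollary \ref{hypcor} delivers exactly the same conclusion: for every low $L$, $A \cap L$ is not $L$-hyperimmune. Since the sets $L_{\langle e,i \rangle}$ appearing in the construction are low by construction (with witnessing lowness indices $l_{\langle e,i \rangle}$), Corollary \ref{hypcor} applies to each and supplies the witness that $w$ is meant to compute.

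I would therefore run the construction of Theorem \ref{incthm} verbatim with $C_0 = 0'$ as the only set in the list, substituting Corollary \ref{hypcor} for Proposition \ref{auxprop} in the justification of non-hyperimmunity. Everything else---the priority ordering, the cancellations triggered by revisions of the $0''$-approximation to $w$, and the inductive verification that every $R_e$ is eventually permanently satisfied---goes through unchanged. The only step requiring any thought is the one just described, and it is routine once Corollary \ref{hypcor} is in hand. The resulting $X \tleq 0'$ is then an incomplete infinite $\Delta^0_2$ subset of $A$, as required.
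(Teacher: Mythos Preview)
Your approach is correct but differs from the paper's in one respect worth noting. You specialize Theorem \ref{incthm} by taking the single set $C_0 = 0'$ and substituting Corollary \ref{hypcor} for Proposition \ref{auxprop} to supply the non-$L$-hyperimmunity of $A \cap L$ for each low $L$ that arises; the pruning condition (looking for two finite subsets $G_0, G_1$ of $\rng(\sigma)$ that give incompatible computations) and the cone-avoidance clause in Lawton's theorem are then carried over verbatim, the latter becoming vacuous since any low path automatically fails to compute $0'$. The paper instead modifies the construction so as to \emph{build} a $\Delta^0_2$ set $C$ alongside $X$, diagonalizing to ensure $\Phi_e^X \neq C$ for each $e$: the pruning now asks only for a single $G$ with $\Phi_e^{X \uhr r_i \cup G}(s)\!\downarrow$, and when such a $G$ is found at stage $s$ one simply defines $C(s)$ to disagree with that value. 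This buys a slightly simpler tree-pruning step and removes any need for cone avoidance (the plain relativized low basis theorem suffices), at the cost of carrying the auxiliary set $C$ through the construction. Both routes ultimately rely on Corollary \ref{hypcor} for the non-hyperimmunity input, and both are sound.
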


\begin{proof}[Proof Sketch]
The proof is similar to that of Theorem \ref{incthm}. Instead of
working with the given sets $C_i$, we build a $\Delta^0_2$ set $C$
while satisfying the requirements
$$
R_e : \Phi_e^X \mbox{ total} \;\; \Rightarrow\; \exists
n\,(\Phi_e^X(n) \neq C(n)).
$$

At stage $s$, we work with the least unsatisfied requirement $R_i$. We
have a number $r_i$ and a low set $L_i$ with lowness index $l_i$, such
that $L_i$ contains $X_s \uhr r_i$. We define $\widehat{T}$ as
before. For each node $\sigma = (m_0,\ldots,m_{k-1})$, if there is a
nonempty $G \subseteq \sigma$ such that $\Phi^{X \uhr r_i \cup
G}_e(s)\!\downarrow$ with use bounded by the largest element of $G$,
then we prune $\widehat{T}$ to ensure that $\sigma$ is not extendible
to an infinite path, thus obtaining a new $L_i$-computable tree $T$.

If $T$ is finite, then we look for a leaf $\sigma$ of $T$ and a $G$ as
above, let $r_{i+1}$ be the largest element of $G$, define $C(s) \neq
\Phi^{X \uhr r_i \cup G}_i(s)$, let $L_{i+1}=L_i$, let $l_{i+1}=l_i$,
and put every element of $G$ into $X$.

If $T$ is infinite, we $0'$-effectively obtain a low path $P$ of $T$
and a lowness index $l_{i+1}$ for $L_{i+1} = X \uhr r_i \cup
\rng(P)$. We then let $r_{i+1}=r_i$ and $C(s)=0$, search for an
element of $A \cap L_{i+1}$ greater than $\max\{r_j \mid j \leq i\}$
not already in $X$, and put this number into $X$.

The further details of the construction are as before, and the
verification that it succeeds in satisfying all the requirements is
similar.
\end{proof}

\end{document}